\newtheorem{thm}{Theorem}[section]
\newtheorem{lemma}[thm]{Lemma}
\newtheorem{prop}[thm]{Proposition}
\theoremstyle{definition}
\theoremstyle{remark}
\newtheorem{rem}[thm]{Remark}
\numberwithin{equation}{section}
\def\be#1 {\begin{equation} \label{#1}}
	\newcommand{\ee}{\end{equation}}
\def\sqw{\hbox{\rlap{\leavevmode\raise.3ex\hbox{$\sqcap$}}$%
		\sqcup$}}
\def\findem{\ifmmode\sqw\else{\ifhmode\unskip\fi\nobreak\hfil
		\penalty50\hskip1em\null\nobreak\hfil\sqw
		\parfillskip=0pt\finalhyphendemerits=0\endgraf}\fi}
\newcommand{\Be}{\begin{equation}}
\newcommand{\Ee}{\end{equation}}
\newcommand{\supp}{\operatorname{supp}}
\begin{document}

\title[]{Bilinear maximal functions associated with degenerate surfaces}
\author[ ]{Sanghyuk Lee and Kalachand Shuin}

\address{Department of Mathematical Sciences, Seoul National University, Seoul 08826, Republic of Korea}
\email{shklee@snu.ac.kr, kcshuin21@snu.ac.kr} 

\subjclass[2010]{Primary 42B25; Secondary 42B15; 46T30}
\date{\today}
\keywords{Hardy-Littlewood maximal function, Bilinear Spherical maximal function, Gaussian curvature, Degenerate surfaces, Finite type curves}

\date{\today}

\begin{abstract}

We study $L^{p}\times L^{q}\rightarrow L^{r}$-boundedness of (sub)bilinear maximal functions associated with degenerate hypersurfaces. First, we obtain the  maximal bound on the sharp range of exponents $p,q,r$  (except some border line cases) for the bilinear maximal functions given by the model surface $\big\{(y,z)\in\mathbb{R}^{n}\times \mathbb{R}^{n}:|y|^{l_{1}}+|z|^{l_{2}}=1\big\}$,  $(l_{1},l_{2})\in [1,\infty)^2$, $n\ge 2$.
 Our result  manifests  that  nonvanishing Gaussian curvature is not good enough,   in contrast with $L^p$-boundedness of the (sub)linear  maximal operator associated to hypersurfaces, to characterize the best possible maximal boundedness.    
Secondly, we consider the bilinear maximal function associated to the finite type curve in $\mathbb R^2$ and obtain a complete characterization of the maximal bound. We also prove multilinear generalizations of the aforementioned results. 
\end{abstract}
\maketitle

\section{Introduction and main results} \label{section1}

Let $d\geq2$ and  $d\sigma$ be the surface measure on a smooth hypersurface $\Sigma$. The  maximal function associated with $\Sigma$ is defined  by 
\begin{eqnarray*}
M\!f(x)=\sup_{t>0}\Big|\int_{\Sigma}f(x-ty) d\sigma(y)\Big| 
\end{eqnarray*} 
for $f\in\mathcal{S}(\mathbb{R}^{d})$.  $L^p$ boundedness of $M$ has been extensively studied for the last several decades since 
Stein's seminal work   \cite{stein}  on the spherical maximal function, which we denote by   $\mathcal M_{s}$. 
The celebrated theorem due to Stein \cite{stein} and  Bourgain \cite{Bourgain} is that 
  $\mathcal M_{s}$ is bounded on $L^{p}$ if and only if $p>{d}/(d-1)$. 
  The results naturally extends to compact surfaces with nonvanishing Gaussian curvature (see, e.g., \cite{Greenleaf}),
  which are said to be {\it nondegenerate}.  Besides $L^p$ bounds,   $L^{p}$--$L^{q}$ estimates ($p<q$) for the localized  maximal function which is defined by taking supremum over a compact interval included in $(0,\infty)$ are also almost completely understood  except for some endpoint cases  \cite{S,SS,SanghyukLee1} (see, also, \cite{ajs} and \cite{RS} for recent related developments when  the supremum is taken over on a subset of dimension less than $1$).

  The maximal functions associated  with 
  smooth degenerate surfaces were  also considered by various authors 
  (\cite{Sogge, Cowling1,Cowling2,Iosevich1,Iosevich2,Sawyer} and references therein). 
    However,  the  problem of characterizing $L^{p}$ boundedness of $M$ when Gaussian curvature of $\Sigma$  is allowed to vanish at finite order  remains largely open when $d\ge 3$.   
    The problem for curves in $\mathbb R^2$ is relatively simpler \cite{Iosevich}.   So far, this problem is better understood in dimension $d=3$ (see, e.g.,\cite{Ikromov}).  Maximal functions defined by convex surfaces   and some model surfaces were also studied 
  (\cite{Seeger, Sawyer, Iosevich1}).  There are also results for surfaces which are  non-smooth and non-convex hypersurfaces (\cite{Hong}).

\subsection*{Bilinear spherical maximal function} 

Let $n\geq1$, and let $S^{\Phi}$ denote a compact surface given by $S^{\Phi}=\{(y,z)\in \mathbb{R}^{2n}: \Phi(y,z)=0\}$ 
for a smooth function $\Phi$  on $\mathbb{R}^{2n}$.
We now consider the maximal function  
\begin{eqnarray*}
\mathfrak{M}^{{\Phi}}(f,g)(x):=\sup_{t>0}\Big|\int_{{S}^{\Phi}}f(x-ty)g(x-tz)d\sigma_{\Phi}(y,z)\Big|,
\end{eqnarray*} 
where $d\sigma_{\Phi}$ is the normalized surface measure on ${S}^{\Phi}$.  
The maximal operator can be regarded as a (sub)bilinear analogue of the (sub)linear maximal operator given by $S^{\Phi}$. 
The problem is to characterize the exponents $(p,q,r)$ for which the  estimate 
\begin{eqnarray}\label{trivialestimate}
\Vert\mathfrak{M}^{{\Phi}}(f,g)\Vert_{L^{r}}\leq C\Vert f\Vert_{L^{p}}\Vert g\Vert_{L^{q}}
\end{eqnarray}
holds. 
 When $\Phi(y,z)=|y|^{2}+|z|^{2}-1$, $\mathfrak{M}^{{\Phi}}$ is called 
 the bilinear spherical maximal function, which we denote by 
 $\mathfrak{M}_s$. Boundedness of $\mathfrak{M}_s$ was first studied by  Geba--Greenleaf--Iosevich--Palsson--Sawyer \cite{Geba}, and later 
by Barrionuevo--Grafakos--He--Honz\'{i}k--Oliveira \cite{Grafakos1} and  Heo--Hong--Yang \cite{Heo}.  They obtained partial results. 
More recently, Jeong and the first  author  \cite{Eunhee} established boundedness of $\mathfrak  M_{s}$ on the sharp  range for $n\geq2$. 
More precisely,  it was shown  that $\mathfrak{M}_s$ boundedly maps $L^{p}\times L^{q}\rightarrow L^{r}$ if ${1}/{r}={1}/{p}+{1}/{q}$
 and  
 	$$ ({1}/{p},{1}/{q})\in \mathcal{P}:=\big\{(x,y)\in [0,1)^{2}:x+y<(2n-1)/n\big\}.$$ 
	The result also has a natural multilinear generalization  (\cite{Eunhee} and \cite{Dosidis1, Dosidis2}). 
		Boundedness of the 
bilinear circular maximal function (when $n=1$)  was shown by Christ--Zhou \cite{Christ} and Dosidis--Ramos \cite{Dosidis3}, independently.  

There are also results in different directions. Sparse domination of $\mathfrak  M_{s}$ was  studied by Palsson--Sovine \cite{Pallson2} and  Borges--Foster--Ou--Pipher--Zhou \cite{ou}. 
Related results for maximal product of spherical averages were obtained  by Roncal,  Shrivastava and  the second author \cite{Luzsaurabh}.
The discrete analogues of $\mathfrak M_{s}$ were studied  by Anderson and Palsson  \cite{Anderson1,Anderson2}.  
Christ and Zhou \cite{Christ}  recently studied boundedness of bilinear lacunary maximal function defined on certain class of  curves.

However, boundedness of bilinear maximal estimate associated with other hypersurfaces than the sphere is not well understood. 
As far as the authors are aware, there are only a few results concerning bilinear maximal function associated with degenerate surfaces. 
 Chen--Grafakos--He--Honzík--Slavíková \cite{He} obtained $L^{2}\times L^{2}\rightarrow L^{1}$ estimate for bilinear maximal function defined by a $(2n-1)$-dimensional compact hypersurface which 
 has  more than   $n+2$ non-vanishing principal curvatures.  
 
 We begin our discussion, on   bilinear maximal function associated with  degenerate surfaces,  with  
 the following observation which extends the result in  \cite{He}.

\begin{prop}\label{proposition}
	Let ${S}$ be a $(2n-1)$-dimensional compact hypersurface which has at least $n+1$ non-vanishing principal curvatures. Then, the bilinear maximal function $\mathfrak{M}^{{S}}$, defined over the hypersurface ${S}$ maps $L^{p}\times L^{q}\rightarrow L^{r}$, for $1<p,q,r\leq\infty$ with ${1}/{r}={1}/{p}+{1}/{q}$.
\end{prop}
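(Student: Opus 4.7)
The plan is to dominate the bisublinear maximal operator $\mathfrak{M}^S$ pointwise by linear maximal operators associated with the coordinate projections of $d\sigma$, and then conclude by bilinear interpolation against the trivial $L^\infty\times L^\infty\to L^\infty$ bound. Let $\pi_y,\pi_z:\mathbb{R}^{2n}\to\mathbb{R}^n$ denote the two projections and set $\nu^y=(\pi_y)_*d\sigma$, $\nu^z=(\pi_z)_*d\sigma$. The trivial inequalities $|A_t(f,g)(x)|\le \|g\|_\infty\int|f(x-ty)|\,d\nu^y(y)$ and $|A_t(f,g)(x)|\le\|f\|_\infty\int|g(x-tz)|\,d\nu^z(z)$ give
\[
\mathfrak{M}^{S}(f,g)(x)\le\|g\|_\infty M_{\nu^y}f(x),\qquad \mathfrak{M}^{S}(f,g)(x)\le\|f\|_\infty M_{\nu^z}g(x),
\]
where $M_\nu h(x):=\sup_{t>0}\int|h(x-tu)|\,d\nu(u)$ is the linear maximal operator generated by $\nu$.

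The key geometric input is that the pushforward measures $\nu^y$ and $\nu^z$ are absolutely continuous on $\mathbb{R}^n$ with bounded densities. The critical points $p\in S$ of $\pi_y|_S$ are characterized by $\{0\}\times\mathbb{R}^n_z\subset T_pS$, equivalently by the unit normal $N_p$ lying in $\mathbb{R}^n_y\times\{0\}$. Writing $S$ locally near such a $p$ as a graph $y_1=\phi(y',z)$ with $\nabla\phi(p)=0$, the assumption that the second fundamental form has rank at least $n+1$ forces $\operatorname{Hess}\phi(p)$ to have rank at least $n+1$; since the complement of the $z$-subspace inside $T_pS$ is only $(n-1)$-dimensional, the restriction $\operatorname{Hess}_z\phi(p)$ must have rank at least $(n+1)-(n-1)=2$. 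This rank condition ensures that the fold-type singularity of $\pi_y|_S$ is sufficiently tame for the coarea formula to yield a uniformly bounded density for $\nu^y$, and the same argument applies to $\nu^z$. Consequently $M_{\nu^y}h$ and $M_{\nu^z}h$ are pointwise dominated by the Hardy--Littlewood maximal function, so both operators are bounded on $L^p$ for every $p>1$, yielding $\mathfrak{M}^{S}:L^p\times L^\infty\to L^p$ and $\mathfrak{M}^{S}:L^\infty\times L^q\to L^q$ for all $p,q>1$.

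These edge estimates, together with the trivial bound $\mathfrak{M}^{S}:L^\infty\times L^\infty\to L^\infty$, cover the three corners of the target region $\{(1/p,1/q)\in[0,1)^2:1/p+1/q<1\}$. Bilinear real interpolation (applicable to the bisublinear operator $\mathfrak{M}^{S}$) then fills in the interior, yielding the full claim $\mathfrak{M}^{S}:L^p\times L^q\to L^r$ for all $1<p,q,r\le\infty$ with $1/r=1/p+1/q$. The main technical hurdle is the rigorous verification of the boundedness of the pushforward densities: a purely Fourier-theoretic (Rubio de Francia--Stein) argument based on the decay $|\widehat{d\sigma}(\xi,\eta)|\lesssim(1+|(\xi,\eta)|)^{-(n+1)/2}$ alone would give $L^p$ boundedness of $M_{\nu^y}$ only for $p>1+1/(n+1)$, falling short of the full range $p>1$, so the geometric analysis of the critical set via the curvature hypothesis is essential.
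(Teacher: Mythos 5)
Your reduction to the two ``edge'' estimates $\mathfrak M^S:L^p\times L^\infty\to L^p$ and $L^\infty\times L^q\to L^q$, followed by interpolation, is the same skeleton the paper uses. The gap is in how you prove the edge estimates.

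Your key claim is that the pushforward $\nu^y=(\pi_y)_*d\sigma$ has a \emph{bounded} density, so that $M_{\nu^y}$ is pointwise dominated by Hardy--Littlewood. This is not true under the stated hypotheses, and the linear-algebra step you use to justify it is incorrect. You argue that since $\operatorname{Hess}\phi(p)$ has rank $\ge n+1$ and the $y'$-block has only $n-1$ rows, the $z$-block must have rank $\ge 2$. But the rank of a principal submatrix of a symmetric matrix of rank $k$ is only bounded below by $k-2(\text{codimension})$, not $k-(\text{codimension})$, because both rows and columns outside the block can carry rank. Concretely, for $n=2$ the matrix
\[
\operatorname{Hess}\phi(p)=\begin{pmatrix}1&1&0\\1&0&0\\0&0&1\end{pmatrix}
\]
in coordinates $(y_2,z_1,z_2)$ has rank $3=n+1$, yet its $z$-block $\begin{pmatrix}0&0\\0&1\end{pmatrix}$ has rank $1$. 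For the model $\phi=\tfrac12 y_2^2+y_2z_1+\tfrac12 z_2^2$, one computes that the coarea factor is $|\partial_z\phi|=\sqrt{y_2^2+z_2^2}$ and the pushforward density behaves like $\min\big((2y_1-y_2^2)^{-1/2},\,|y_2|^{-1/2}\big)$, which is unbounded near the origin. So the pointwise bound $M_{\nu^y}f\lesssim Mf$ genuinely fails, not just your justification of it.

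Ironically, the Fourier-theoretic route that you dismiss in the final sentence is exactly the one that works, and it is the paper's proof. The multiplier to apply Rubio de Francia's theorem to is $m(\xi)=\widehat{d\sigma}(\xi,0)$, a function on $\mathbb R^n$ (not $\mathbb R^{2n}$), and it inherits the decay $|\partial^\alpha m(\xi)|\lesssim(1+|\xi|)^{-(n+1)/2}$ from Littman's lemma applied to $S$. Plugging $a=(n+1)/2$ into Rubio de Francia's threshold $p_0=2n/(n+2a-1)$ in dimension $n$ gives $p_0=1$ exactly, so $M_{\nu^y}$ is bounded on $L^p$ for \emph{every} $p>1$. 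Your assertion that this argument only reaches $p>1+1/(n+1)$ is a miscalculation; you do not fall short, and no geometric analysis of the critical set is needed.
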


   \newcommand{\bl}{\mathbf a}
   \newcommand{\bt}{\mathbf t}

\subsection*{Bilinear maximal function associated to degenerate hypersurfaces} 
The main object of this paper is to investigate  boundedness of $\mathfrak{M}^{{\Phi}}$ with some degenerate model surfaces. 
For the purpose, we consider the surface
  \begin{eqnarray*}
S_{2}^{\bl}:=\big\{(y,z)\in\mathbb{R}^{n}\times \mathbb{R}^{n}:|y|^{\bl_{1}}+|z|^{\bl_{2}}=1 \big\}, 
\end{eqnarray*}
for  $n\geq2$ and $\bl =(\bl_1, \bl_2)\in  (0,\infty)^2$.  
%
%
We consider a biparametric bilinear averaging operator  $\mathcal{A}^{\bl}_{\bt}$ over $S_{2}^{\bl}$ which  is given by 
\begin{eqnarray*}
	\mathcal{A}^{\bl}_{\bt}(f,g)(x):=\int_{S_{2}^{\bl}}f(x-t_{1}y)g(x-t_{2}z)~d\mu(y,z), \quad \bt\in (0,\infty)^2
\end{eqnarray*}
where $d\mu$ is the normalized surface measure on $S_{2}^{\bl}$.
Instead of one parameter maximal function we study  the biparametric   maximal function
$$\mathfrak{M}^{\bl}(f,g)(x):=\sup_{\bt\in (0,\infty)^2 }\big|\mathcal{A}^{\bl}_{\bt}(f,g)(x)\big|.$$
To state our result, for $\bl\in (0,\infty)^2$ we set 
\begin{eqnarray*}
	\mathcal{P}^{\bl}&=&\Big\{(x,y)\in \mathcal{P}: x<1-\frac{(\bl_2-n)_{+}}{\bl_{2}n},\ \  y<1- \frac{(\bl_1-n)_{+}}{\bl_{1}n} \Big\},
\end{eqnarray*} 
where $a_{+}=\max\{a, 0\}$. Note that  $\mathcal{P}^{\bl}=\mathcal{P}$ when $\bl_1, \bl_2 \leq n$.

\begin{thm}\label{Theorem1}
	Let $n\geq2$ and $\bl\in [1,\infty)^2$. Let $1\leq p,q\leq\infty$ and $r>0$ with  ${1}/{r}={1}/{p}+{1}/{q}$. Then, the estimate 
	\begin{eqnarray}\label{theestimate}
\Vert \mathfrak{M}^{\bl}(f,g)\Vert_{L^{r}(\mathbb{R}^{n})}\lesssim \Vert f\Vert_{L^{p}(\mathbb{R}^{n})}\Vert g\Vert_{L^{q}(\mathbb{R}^{n})}
\end{eqnarray}
 holds if $({1}/{p},{1}/{q})\in \mathcal{P}^{\bl}$.
\end{thm}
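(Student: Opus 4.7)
The strategy is to exploit the polar structure of $S_2^{\bl}$. Writing $(y,z)=(s\omega,R(s)\eta)$ with $s\in[0,1]$, $R(s)=(1-s^{a_1})^{1/a_2}$, and $\omega,\eta\in S^{n-1}$, the bilinear average factorizes as
\[
\mathcal{A}^{\bl}_{\bt}(f,g)(x)=c_n\int_0^1 A_{t_1 s}f(x)\,A_{t_2 R(s)}g(x)\,s^{n-1}R(s)^{n-a_2}\,ds,
\]
where $A_r h(x):=\int_{S^{n-1}}h(x-r\omega)\,d\sigma(\omega)$ is the spherical mean. Since $t_1,t_2$ are independent and $s,R(s)>0$ on $(0,1)$, moving the pointwise supremum inside the $s$-integral gives at once $\mathfrak M^{\bl}(f,g)\lesssim \mathcal M_s f\cdot \mathcal M_s g$, which together with Stein--Bourgain settles the easy subregion $\{1/p,1/q<(n-1)/n\}$ of $\mathcal P^{\bl}$.

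To reach the rest of $\mathcal{P}^{\bl}$, I split the $s$-integral via a smooth partition of unity into a middle part $s\in[\delta,1-\delta]$ and two endpoint parts, and dyadically decompose the endpoint parts as $s\in[2^{-k-1},2^{-k}]$ (resp.\ $1-s\in[2^{-k-1},2^{-k}]$), $k\ge 0$. On the dyadic piece near $s=0$, the substitution $y=t_1 s\omega$ gives
\[
\int_{s\sim 2^{-k}}A_{t_1 s}f(x)\,ds=\frac{1}{t_1}\int_{|y|\sim t_1 2^{-k}}\frac{f(x-y)}{|y|^{n-1}}\,dy\lesssim 2^{-k}M_{\mathrm{HL}}f(x)
\]
uniformly in $t_1$, where $M_{\mathrm{HL}}$ denotes the Hardy--Littlewood maximal. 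Combined with $A_{t_2 R(s)}g\le\mathcal M_s g$ and the weight $s^{n-1}R(s)^{n-a_2}\sim 2^{-k(n-1)}$, summing over $k$ produces $\mathfrak M^{\bl,\mathrm{near}\,0}(f,g)\lesssim M_{\mathrm{HL}}f\cdot\mathcal M_s g$. Swapping the roles of $f$ and $g$ on the same piece, the change of variables $r=t_2 R(s)$ has Jacobian $|ds/dr|\sim 2^{k(a_1-1)}/t_2$ and gives $\mathfrak M^{\bl,k,\mathrm{near}\,0}\lesssim 2^{k(a_1-n)}\mathcal M_s f\cdot M_{\mathrm{HL}}g$; this sums directly when $a_1\le n$, and for $a_1>n$ it is interpolated bilinearly with the unconditionally summable bound $\mathfrak M^{\bl,k,\mathrm{near}\,0}\lesssim 2^{-kn}\mathcal M_s f\cdot\mathcal M_s g$. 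The condition for the interpolated dyadic sum to converge is exactly $1/q<(n-1)/n+1/a_1=1-(a_1-n)_+/(a_1 n)$. A symmetric analysis near $s=1$, where $R(s)^{n-a_2}\sim (1-s)^{(n-a_2)/a_2}$ is singular when $a_2>n$, produces the cross-constraint $1/p<1-(a_2-n)_+/(a_2 n)$; together these are the defining inequalities of $\mathcal P^{\bl}$.

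On the middle piece $s\in[\delta,1-\delta]$ the surface has all $2n-1$ principal curvatures bounded away from zero, so Proposition~\ref{proposition} handles the Banach range $p,q,r>1$. The non-Banach portion of $\mathcal{P}^{\bl}$ (the corner with both $1/p,1/q\ge(n-1)/n$ and $r<1$) is reached by interpolating with an $L^2\times L^2\to L^1$ bilinear estimate on the middle piece, which one establishes via the sphere-like Fourier decay $|\widehat{d\mu_{\mathrm{mid}}}(\xi,\eta)|\lesssim (1+|(\xi,\eta)|)^{-(2n-1)/2}$ and a bilinear Fourier-analytic argument of the Jeong--Lee type; analogous dyadic $L^2\times L^2\to L^1$ estimates are needed on each endpoint dyadic piece.

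The main obstacle lies in the last step: one must carry out the bilinear complex interpolation on each dyadic endpoint piece carefully, balancing the competing dyadic factors ($2^{-kn}$ vs.\ $2^{k(a_1-n)}$ near $s=0$, and $2^{-kn/a_2}$ vs.\ $2^{k(a_2-n)/a_2}$ near $s=1$) against the $L^p\times L^q\to L^r$ norms, so that the summed operator lands precisely on $\mathcal P^{\bl}$ rather than a strictly smaller or larger region. Once that calibration is done, assembling the middle-piece bound with the endpoint contributions yields the theorem.
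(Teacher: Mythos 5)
Your polar decomposition of the surface measure is exactly the slicing identity used in the paper (cf.\ \eqref{gs} and \eqref{fs}), and your dyadic analysis near $s=0$ and $s=1$ produces the same two families of bounds $\bigl(2^{-kn}M_{\mathrm{HL}}f\cdot\mathcal M_s g$ versus $2^{k(\bl_1-n)}\mathcal M_s f\cdot M_{\mathrm{HL}}g$ and their mirror images$\bigr)$ whose interpolation yields the constraints $1/q<1-(\bl_1-n)_+/(\bl_1 n)$ and $1/p<1-(\bl_2-n)_+/(\bl_2 n)$. That part is in substance the paper's proof, just with the dyadic index running over $s=|y|$ rather than over $1-|y|^{\bl_1}$.

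The genuine problem is your treatment of the middle piece $s\in[\delta,1-\delta]$. You propose to cover the sub-Banach corner of $\mathcal P^{\bl}$ by interpolating the Banach-range $(p,q,r>1)$ estimates from Proposition~\ref{proposition} with an $L^2\times L^2\to L^1$ estimate obtained from Fourier decay of the localized measure. This cannot work: along the H\"older scale $1/r=1/p+1/q$, bilinear interpolation is affine in $(1/p,1/q,1/r)$, so if every input has $1/r\le1$ then every interpolated exponent also has $1/r\le1$. The point $(1/2,1/2)\mapsto r=1$ and all of Proposition~\ref{proposition}'s exponents lie in the Banach range, so no combination of them reaches $r<1$. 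In fact no Fourier-analytic ingredient is needed at all: on $[\delta,1-\delta]$ the weight $s^{n-1}R(s)^{n-\bl_2}$ is bounded, so the same slicing formula gives the pointwise bounds $\mathfrak M^{\bl,\mathrm{mid}}(f,g)\lesssim M_{\mathrm{HL}}f\cdot\mathcal M_s g$ and $\mathfrak M^{\bl,\mathrm{mid}}(f,g)\lesssim\mathcal M_s f\cdot M_{\mathrm{HL}}g$; H\"older's inequality (valid for $r<1$) applied to these pointwise products, followed by bilinear interpolation, yields all of $\mathcal P$ for the middle piece, including the non-Banach corner. This is exactly what the paper does in Case $(\mathrm A)$ --- it never isolates a middle piece or invokes Littman/Rubio de Francia for Theorem~\ref{Theorem1}. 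So you should discard the Fourier-decay route and instead observe that the very pointwise bounds you already derived near $s=0$ extend trivially to $[\delta,1-\delta]$. The remaining ``calibration'' you flag as an obstacle is routine: interpolate the dyadic norms before summing, as in the paper.
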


One can also obtain  weak or restricted weak type bounds on  $\mathfrak{M}^{\bl}$ for $({1}/{p},{1}/{q})$ which belongs  to the boundary line segments of $\mathcal{P}^{\bl}$, using a simple summation trick (for example, see \cite{Eunhee} and \cite[Lemma $2.6$]{SanghyukLee1}).
However,  in this article we focus on the strong type bound.  In order to prove Theorem \ref{Theorem1}, we make use of  the slicing argument from \cite{Eunhee}, by which it was also possible to  prove boundedness of the biparametric bilinear spherical maximal function. In Section \ref{multilinearversion} we also consider multilinear maximal operators  generalizing Theorem \ref{Theorem1}.

\begin{rem}[Sharpness of Theorem \ref{Theorem1}]\label{rem1}
The range of exponents in Theorem 1.2 is sharp in that 
	the estimate \eqref{theestimate} does not hold if $({1}/{p},{1}/{q})\notin \overline{{\mathcal{P}}^{\bl}}$, where $\overline{{\mathcal{P}}^{\bl}}$ denotes the closure of  $\mathcal{P}^{\bl}$.  Moreover, the operator $\mathfrak{M}^{\bl}$ does not map $L^{1}\times L^{\infty}\rightarrow L^{1}$ and $ L^{\infty}\times L^{1}\rightarrow L^{1}$. The same holds true for the maximal operator  
	 \vspace{-3pt}
	 \Be\label{1-parameter} \widetilde {\mathfrak{M}}^{\bl}(f,g)(x):=\sup_{t\in (0,\infty) }\big|\mathcal{A}^{\bl}_{(t,t)}(f,g)(x)\big|.
	 \Ee	 
	
	 \vspace{-7pt}
\noindent (See Section \ref{sec:2.2}.) Hence, Theorem \ref{Theorem1} also holds even if  $\mathfrak{M}^{\bl}$ is replaced by $ \widetilde {\mathfrak{M}}^{\bl}	$. \end{rem}


 Consider  $S_{2}^{(2,2l_{2})}$ for an integer $l_{2}\geq2$, i.e., the hypersurface which is  given by   $y^{2}_{1}+y^{2}_{2}+\cdots +y^{2}_{n}+(z^{2}_{1}+\cdots+z^{2}_{n})^{l_{2}}=1$. Observe that Gaussian curvature of $S_{2}^{(2,2l_{2})}$ vanishes at the point $(\theta,0)\in\mathbb{R}^{n}\times \mathbb{R}^{n}$ whenever $|\theta|=1$.  However, Theorem \ref{Theorem1} implies that $\mathfrak{M}^{(2,2l_{2})}$ enjoys the same boundedness property as the bilinear spherical maximal function, that is to say, $\mathfrak{M}^{(2,2l_{2})}$ is bounded from $L^{p}\times L^{q}$ to $L^{r}$, ${1}/{r}={1}/{p}+{1}/{q}$, for 
 	$ ({1}/{p},{1}/{q})\in \mathcal{P}$ as long as $ 2l_2\leq n$.   In contrast to $L^p$ boundedness of  the linear maximal function, the boundedness of $\mathfrak{M}^{\bl}$ is less sensitive to  curvature vanishing.

\begin{figure}[t]
	\begin{tikzpicture}[scale=3.8,font=\small]]
		\fill[lightgray] (0,0)--(1,0)--(0.5,0.5)--(0,0.5)--cycle;
		\draw[densely dotted]  (0,1) -- (1,0);
		\draw[thin][->]  (0,0) --(1.15,0) node[right]{$1/p$};
		\draw[thin][->]  (0,0) --(0,1.2) node[left]{$1/q$};
		\draw[densely dotted] (0,1) node[left]{$(0,1)$} --(1,1);
		\draw [densely dotted] (1,0)node[below]{$(1,0)$} --(1,1);
		\draw[densely dotted] (0,0.5)node[left]{$(0,\frac{1}{m})$}--(0.5,0.5)node[right]{$(\frac{m-1}{m},\frac{1}{m})$};
			\end{tikzpicture}
	\begin{tikzpicture}[scale=3.8]
		\fill[lightgray] (0,0)--(1,0)--(0,0.5)--cycle;
		\draw[densely dotted]  (0,1) -- (1,0);
		\draw[thin][->]  (0,0) --(1.15,0) node[right]{$1/p$};
		\draw[thin][->]  (0,0) --(0,1.2) node[left]{$1/q$};
		\draw[densely dotted] (0,1) node[left]{$(0,1)$} --(1,1);
		\draw [densely dotted] (1,0) node[below]{$(1,0)$} --(1,1);
		\draw[densely dotted] (0,0.5)node[left]{$(0,\frac{1}{m})$}--(0.5,0.5)node[right]{$(\frac{m-1}{m},\frac{1}{m})$};
		\draw[densely dotted] (0,0.5)--(1,0);
		
	\end{tikzpicture}
\caption{The operator $\mathcal{M}^{\gamma}$ is bounded for  $(1/p,1/q)$ belonging  to the gray regions. }
\label{finitetype}
\end{figure}
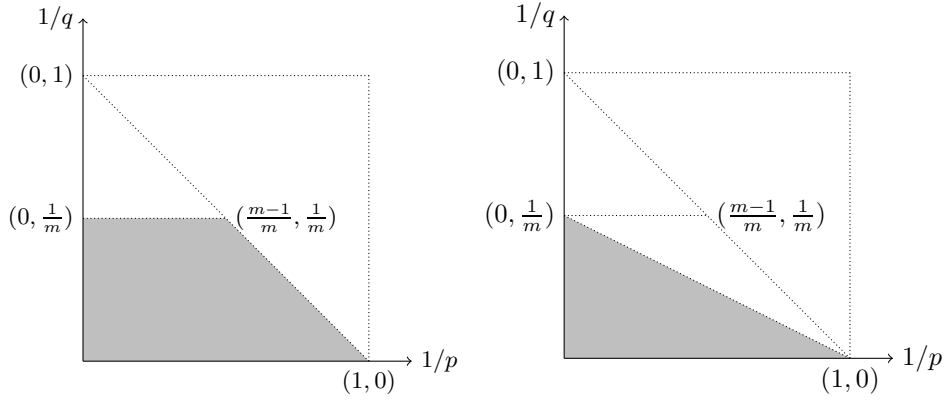

\subsection*{Bilinear maximal functions associated to curves}
Let $I$ denote the interval $(0,1)$ and let $\gamma: I\to \mathbb R^2$  be a smooth  regular curve.  We consider the maximal operator 
\[
	\mathcal{M}^{\gamma}(f,g)(x):=\sup_{\bt>0}\Big|\int f(x-t_{1}\gamma_{1}(s))g(x-t_{2}\gamma_{2}(s))\psi(s)ds\Big|, 
	\]
	where $\gamma(s)=(\gamma_{1}(s),\gamma_{2}(s))$ and $\psi\in C_c^\infty(I)$. If  $\gamma'_{1}(s)\neq0$ and $\gamma'_{2}(s)\neq0$ on $\supp \psi$,
	we have a complete characterization of $L^p\times L^q\rightarrow L^r$ boundedness.

	\begin{thm}\label{result2} 
		Let $0< p,q\leq\infty$ and ${1}/{r}={1}/{p}+{1}/{q}$. Let  $\psi\in C_c^\infty(I)$ be nonnegative and nontrivial.  Suppose $\gamma'_{1}(s)\neq0$ and $\gamma'_{2}(s)\neq0$ on $\supp \psi$.   Then,  
		\begin{eqnarray}\label{estimateforfinitetypecurve}
\Vert \mathcal{M}^{\gamma}(f,g)\Vert_{L^{r}(\mathbb{R})}\lesssim \Vert f\Vert_{L^{p}(\mathbb{R})}\Vert g\Vert_{L^{q}(\mathbb{R})}
\end{eqnarray}holds if and only if $1<r\leq\infty$. 
		\end{thm}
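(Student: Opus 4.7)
The plan is to establish necessity by testing on characteristic functions of small intervals and sufficiency by a H\"older splitting that reduces the bilinear maximal bound to the Hardy--Littlewood maximal theorem in one dimension.

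\emph{Necessity of $r>1$.} I will test \eqref{estimateforfinitetypecurve} with $f=g=\chi_{[-\epsilon,\epsilon]}$. After shrinking $\psi$ if needed, I fix $s_0\in\supp\psi$ with $\gamma_1(s_0), \gamma_2(s_0)\neq 0$. For $x$ large and positive, set $t_i=x/\gamma_i(s_0)$; since $\gamma_i'(s_0)\neq 0$, a Taylor expansion shows that both $x-t_i\gamma_i(s)$ lie in $[-\epsilon,\epsilon]$ on an $s$-interval of length $\sim\epsilon/x$, yielding $\mathcal{M}^\gamma(f,g)(x)\gtrsim\epsilon/x$ on $x\in [c\epsilon,R]$. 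Integrating then forces a logarithmic blow-up in $R$ at $r=1$ and a polynomial blow-up for $r<1$, ruling out every $r\le 1$.

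\emph{Sufficiency for $r>1$.} The case $p=q=\infty$ is immediate since $\int\psi<\infty$. When exactly one of $p,q$ is infinite, pulling out the corresponding $L^\infty$-norm leaves a univariate maximal operator which, after the change of variable $u=\gamma_i(s)$ (valid since $\gamma_i'\neq 0$ on $\supp\psi$), is pointwise dominated by a constant multiple of the Hardy--Littlewood maximal function $M$. For the main range $1<p,q<\infty$, I use that $r>1$ is equivalent to $q'<p$, so I can choose $a\in(q',p)$. H\"older's inequality in $s$ gives the pointwise bound
\begin{equation*}
|\mathcal{A}^{\gamma}_{\bt}(f,g)(x)| \leq \Bigl(\int|f(x-t_1\gamma_1(s))|^{a}\psi(s)\,ds\Bigr)^{1/a}\Bigl(\int|g(x-t_2\gamma_2(s))|^{a'}\psi(s)\,ds\Bigr)^{1/a'}.
\end{equation*}
Taking the supremum in $t_1$ and $t_2$ separately and substituting $u=\gamma_1(s)$, $v=\gamma_2(s)$ (after a partition of unity so that each $\gamma_i$ is monotone on each piece), each factor is dominated by a constant multiple of $M(|f|^{a})(x)^{1/a}$, respectively $M(|g|^{a'})(x)^{1/a'}$; here I use the elementary bound $\sup_{t>0}|\int f(x-tu)\rho(u)\,du|\lesssim Mf(x)$ for any bounded, compactly supported $\rho$. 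H\"older's inequality followed by the maximal theorem at the exponents $p/a>1$ and $q/a'>1$ then closes the estimate.

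The only step where the hypothesis $r>1$ is genuinely used is the selection of $a$: the interval $(q',p)$ is nonempty precisely when $1/p+1/q<1$, mirroring exactly the obstruction identified in the necessity part. Beyond verifying the routine change-of-variable bounds, I do not anticipate any serious obstacle.
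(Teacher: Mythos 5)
Your proof is correct, and both halves take a genuinely different route from the paper. For sufficiency, the paper proves the two endpoint bounds $L^\infty\times L^q\to L^q$ ($q>1$) and $L^p\times L^\infty\to L^p$ ($p>1$) by the same change of variable and $M$-domination you use, and then invokes bilinear interpolation to fill in the range $1/p+1/q<1$. You instead insert H\"older in the $s$-variable with an auxiliary exponent $a\in(q',p)$, take the suprema separately, and then apply H\"older in $x$ together with the Hardy--Littlewood theorem at $p/a$ and $q/a'$; this avoids bilinear interpolation machinery altogether and makes the role of the hypothesis $r>1$ (equivalently, nonemptiness of $(q',p)$) completely explicit. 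For necessity, the paper tests on $f(x)=|x|^{-1/p}\log^{-2/p}(|x|^{-1})\chi_{(-1,1)}$ and $g(x)=|x|^{-1/q}\log^{-2/q}(|x|^{-1})\chi_{(-1,1)}$ and evaluates near $x=0$, handling $p=\infty$ or $q=\infty$ by a separate modification; you test on $f=g=\chi_{[-\epsilon,\epsilon]}$ and exploit the $\sim\epsilon/x$ tail for large $x$, which treats all exponent cases uniformly and yields divergence of the $L^r$ norm over $[c\epsilon,\infty)$ for every $r\le 1$. Both necessity arguments use the same choice of dilations $t_i=x/\gamma_i(s_0)$ at a point $s_0$ with $\gamma_1(s_0),\gamma_2(s_0)\neq0$ and $\psi(s_0)>0$. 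In short, your argument is a more elementary, self-contained alternative to the paper's interpolation-based proof and reaches the identical conclusion.
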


	Since $\gamma$ is a regular curve (i.e., $\gamma'\neq 0$), by  symmetry  and a change of variables 
	we may assume  that   $\gamma$ is of the form 
	\Be 
	\label{sform} \gamma(s)=(s,\gamma_{2}(s))
	\Ee  in a small neighborhood of $s_{0}\in I$.  For 
	$m\ge 2$, we say that $\gamma_2$ is of type $m$ at $s$ if  
	$\gamma_{2}^{(k)}(s)=0$ for $1\leq k<m$ and $\gamma_{2}^{(m)}(s)\neq0$.  
	Iosevich \cite{Iosevich} proved that  the linear maximal function  associated with  curves of type $m$ is bounded on $L^{p}(\mathbb{R}^{2})$ if and only if $p>m$. 
	The following  may be regarded as a bilinear counterpart of his result.  

\begin{thm}\label{result1}
 Let $0< p,q\leq\infty$, $m\ge 2$,  and ${1}/{r}={1}/{p}+{1}/{q}$.  Let  $\psi\in C_c^\infty(I)$ be nonnegative.
 Suppose $\gamma$ takes the form \eqref{sform} in a neighborhood of $s_0\in I$ and $\gamma_2$ is at most type $m$ at $s_0$.   
 If $\supp \psi$ is contained in a sufficiently small neighborhood of $s_0$ and $\psi(s_0)\neq 0$, then we have the following:
	\begin{itemize}
	         \item[$(i)$]  Suppose $\gamma(s_0)=0$. Then, \eqref{estimateforfinitetypecurve} holds if and only if $1<p,q,r\leq\infty$.
		\item [$(ii)$]  Suppose $\gamma(s_0)\neq 0$ and $s_0=0$. Then, \eqref{estimateforfinitetypecurve} holds if and only if $1<p\leq\infty$, $m<q\leq\infty$, and $1<r\le \infty$.
		\item [$(ii)$]  Suppose $\gamma(s_0)\neq 0$ and $s_0\neq0$. Then,  \eqref{estimateforfinitetypecurve} holds if and only if  $1<p,q\leq\infty$ and  ${m}/{q}+{1}/{p}<1$.
	\end{itemize}
\end{thm}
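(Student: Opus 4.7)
The plan is to reduce \eqref{estimateforfinitetypecurve} to sharp bounds for two linear maximal operators and close by bilinear interpolation, complemented by explicit counter-examples for necessity. Set
\[
 M_i h(x):=\sup_{t>0}\Big|\int h(x-t\gamma_i(s))\psi(s)\,ds\Big|,\qquad i=1,2.
\]
The trivial pointwise inequalities $\mathcal M^{\gamma}(f,g)(x)\le\|g\|_\infty M_1 f(x)$ and $\mathcal M^{\gamma}(f,g)(x)\le\|f\|_\infty M_2 g(x)$ reduce matters to understanding $M_1$ and $M_2$.

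Since $\gamma_1(s)=s$, the substitution $u=t_1 s$ dominates $M_1 f$ by the Hardy--Littlewood maximal function of $f$ (centred when $s_0=0$, non-centred with bounded eccentricity when $s_0\ne 0$), so $M_1:L^p\to L^p$ for every $p>1$ in every case. For $M_2$ I split on whether $\gamma_2(s_0)=0$. In case (i), $\gamma_2(s)\sim cs^m$ and the substitution $u=\gamma_2(s)$ on each monotone arc gives a weighted average with locally integrable weight $u^{1/m-1}$, pointwise dominated by $M_{\mathrm{HL}}g$; thus $M_2:L^q\to L^q$ for $q>1$. In cases (ii), (iii) the pushforward $\mu=(\gamma_2)_{*}(\psi\,ds)$ is supported near $b:=\gamma_2(s_0)\ne 0$, and stationary phase at a critical point of type $m$ gives $|\widehat\mu(\xi)|\lesssim (1+|\xi|)^{-1/m}$. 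A Littlewood--Paley / square-function argument (the one-dimensional analogue of Iosevich's theorem) then yields $M_2:L^q\to L^q$ for $q>m$, and the lower bound $M_2\chi_{[0,1]}(x)\gtrsim |x|^{-1/m}$ for large $x$ rules out $q\le m$.

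Bilinear Riesz--Thorin interpolation of the endpoints $L^p\times L^\infty\to L^p$ $(p>1)$ and $L^\infty\times L^q\to L^q$ (with $q>1$ in case (i) and $q>m$ in cases (ii), (iii)) covers the whole claimed region in case (i) and produces the triangle $\{1/p+m/q<1\}\cap(0,1)\times(0,1/m)$ in case (iii). Case (ii) is strictly larger: its quadrilateral bulges past the line $1/p+m/q=1$ up to the line $1/p+1/q=1$. To recover this extra region I exploit that $\gamma_1(0)=0$ makes $M_1 f$ an honestly centred Hardy--Littlewood average at $x$. Splitting the $t_2$-supremum dyadically into $t_2\sim 2^j$ and observing that on each scale the $g$-factor is sampled only in an interval of radius $\lesssim 2^j$ about $x-2^j b$ whose eccentricity relative to $x$ is bounded, I aim to establish the pointwise bound
\[
 \mathcal M^{\gamma}(f,g)(x)\lesssim M_{\mathrm{HL}}f(x)\cdot M_2 g(x).
\]
H\"older's inequality in the target then yields $\|\mathcal M^{\gamma}(f,g)\|_r\lesssim\|f\|_p\|g\|_q$ for every $p>1$, $q>m$ and $1/r=1/p+1/q<1$, i.e.\ the full case-(ii) quadrilateral.

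For necessity I test \eqref{estimateforfinitetypecurve} on $f=\chi_{[-\delta_1,\delta_1]}$ and $g=\chi_{[-\delta_2,\delta_2]}$, translated so that the supports meet $x-t_i\gamma_i(\supp\psi)$ for a suitable pair $(t_1,t_2)$. In case (i) the symmetric scaling already forces $p,q,r>1$; in case (ii) the lower bound $\mathcal M^{\gamma}(f,g)(x)\gtrsim\delta_1\delta_2^{1/m}|x|^{-1-1/m}$ on $|x|\gtrsim\max(\delta_1,\delta_2)$, compared with $\|f\|_p\|g\|_q=\delta_1^{1/p}\delta_2^{1/q}$, forces $q>m$, $p>1$ and $r>1$; case (iii) is handled by the analogous Iosevich-type computation yielding $1/p+m/q<1$. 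The principal obstacle is the pointwise bound in case (ii): simply pulling either $\|f\|_\infty$ or $\|g\|_\infty$ out reaches only the triangle, so obtaining the sharp $M_{\mathrm{HL}}f\cdot M_2 g$ domination requires combining the centred structure of $M_1 f$ with the bounded eccentricity of the dyadic $g$-sampling intervals in an intertwined way.
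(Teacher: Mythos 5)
Your reduction to the linear operators $M_1,M_2$ together with bilinear interpolation of the two boundary estimates $L^p\times L^\infty\to L^p$ and $L^\infty\times L^q\to L^q$ does correctly handle cases $(i)$ and $(iii)$: the interpolation hull of $[0,1)\times\{0\}$ and $\{0\}\times[0,1/m)$ is exactly $\{1/p+m/q<1\}$, matching the paper's answer, and your identification of $M_2:L^q\to L^q$ for $q>m$ (via the density $|y-b|^{1/m-1}$) is the right linear ingredient.

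The gap is in case $(ii)$. The pointwise bound you aim for, $\mathcal M^{\gamma}(f,g)\lesssim M_{\mathrm{HL}}f\cdot M_2 g$, is \emph{false}, and one can see this without any computation: if it held, then H\"older would give
\[
\Vert\mathcal M^{\gamma}(f,g)\Vert_r\le\Vert M_{\mathrm{HL}}f\Vert_p\,\Vert M_2 g\Vert_q\lesssim\Vert f\Vert_p\Vert g\Vert_q
\]
for \emph{every} $p>1$ and $q>m$, with no restriction on $r=(\tfrac1p+\tfrac1q)^{-1}$; H\"older imposes no condition $1/r<1$, so your own claim that the constraint "$1/p+1/q<1$" emerges from H\"older is incorrect. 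That would contradict the necessity of $r>1$ asserted in part $(ii)$ (which you also assert, and which the paper proves via Remark~\ref{rem-nec} using the log-modified power functions $|x|^{-1/p}\log^{-2/p}(1/|x|)\chi_{(-1,1)}$ sampled on an interval where $\gamma_2'\neq 0$). Concretely, with $m=2$ and $f=g=|y|^{-1/2}\log^{-2}(1/|y|)\chi_{|y|<1}\in L^2$, one computes $\mathcal M^{\gamma}(f,g)(x)\gtrsim x^{-1}\log^{-1}(1/x)$ but $M_{\mathrm{HL}}f(x)\,M_2 g(x)\lesssim x^{-1}\log^{-3}(1/x)$ near $x=0$, so the ratio diverges. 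Your "bounded eccentricity" heuristic fails precisely because the pushforward density $|y-c_\ast|^{1/m-1}$ is singular at $y=c_\ast$: while the sampling interval for $g$ at scale $t_2\sim 2^j$ does have bounded eccentricity relative to $x$, expanding that interval to one centered at $x$ would require replacing the singular weight by its supremum, which is infinite. The paper circumvents this by decomposing dyadically in $|y-c_\ast|\sim 2^{-k}$, trading a bounded weight ($\sim 2^{k(m-1)/m}$) for an eccentricity that \emph{grows} like $|c_\ast|2^k$; Lemma~\ref{lemma1} quantifies the $L^p$ cost of this eccentricity as $(1+|h|)^{1/p}$, and interpolating the two $k$-dependent endpoint bounds gives a summable coefficient $2^{k(1/q-1/m)}$ \emph{only} inside the triangle $\{1/p+1/q\le 1\}$, since the endpoints sit on the coordinate axes. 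That is where the constraint $r>1$ actually comes from — not from H\"older but from the geometry of the interpolation hull. You need this dyadic decomposition (or an equivalent device); the one-shot pointwise domination cannot work.
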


The assertion $(ii)$ in Theorem \ref{result1}  recovers the optimal boundedness  of the bilinear circular maximal function which was obtained in \cite{Christ,Dosidis3}.

The two figures in Figure \ref{finitetype}  depict the assertions $(ii)$ and $(iii)$ in Theorem \ref{result1}. That is to say,  suppose that $\gamma$ is of type $m$ at $s_{0}=0$ with $\gamma(s_0)\neq0$, then \eqref{estimateforfinitetypecurve} holds if and only if $({1}/{p},{1}/{q})$ belongs to the gray region of the left figure.  
Suppose that $\gamma$ is of type at $s_{0}\in (0,1]$ with $\gamma_{2}(s_{0})\neq0$, then \eqref{estimateforfinitetypecurve} holds if and only if $({1}/{p},{1}/{q})$ belongs to the triangular gray region in the right figure.

\subsection*{Notations} In what follows,   $C$ denotes a positive constant which may change at each occurrence  and depend on the dimension $n$. 
For any two positive real numbers $A$ and $B$ we write $A\lesssim B$ and $A\gtrsim B$ if there exist constants $c,c'>0$ such that $A\leq cB$ and $A\geq c' B$, respectively. By $A \sim B$ we mean that $A\lesssim B$ and $A\gtrsim B$. For a triple $(p,q,r)$, we always assume that ${1}/{r}={1}/{p}+{1}/{q}$.  

\subsection*{Organization of the paper}  In Section~\ref{sufficientpart1} we prove 
Theorem \ref{Theorem1} and Proposition \ref{proposition} and show sharpness of the results in Theorem \ref{Theorem1}. The proofs of Theorem \ref{result1} and Theorem \ref{result2} are given in Section~\ref{sufficientpart2}, where optimality of the results in Theorem \ref{result1} and Theorem \ref{result2} are also discussed.  In Section~\ref{multilinearversion}  we generalize the results in Theorem \ref{Theorem1}  into a multilinear setting. 

\section{ Proof of Theorem \ref{Theorem1} and Proposition  \ref{proposition} }\label{sufficientpart1}
In this section we prove boundedness of the bilinear maximal functions $\mathfrak{M}^{\bl}$ and $\mathfrak{M}^{\mathcal{S}}$ associated with  a compact hypersurface of having $k>1$ non-vanishing principal curvatures.
 We also show  sharpness of  the results in Theorem \ref{Theorem1} considering particular functions.

\subsection{Proof of Theorem \ref{Theorem1}} 
 To prove boundedness of $\mathfrak{M}^{\bl}$, we utilize the slicing argument from \cite{SanghyukLee1}, which 
 allows us to break the bilinear maximal operator to a product of two linear maximal operators. 
 We begin by recalling the following formula  (\cite[p.136]{Hormander}).
 Let $\Omega$ be a $(k-1)$-dimensional hypersurface in $\mathbb{R}^{k}$ given by $\Omega=\{\omega\in \mathbb{R}^{k}:\Phi(\omega)=0\}$. If $|\nabla\Phi(\omega)|\neq0$ for $\omega\in\Omega$, then 
\begin{eqnarray}\label{identity}
	\int_{\Omega}F(\omega)\frac{d\mu(\omega)}{|\nabla\Phi(\omega)|}=\int_{\mathbb{R}^{k}}F(\omega)\delta(\Phi)~d\omega,
\end{eqnarray}
where $d\mu$ is the induced surface measure on $\Omega$ and $F$ is a continuous function on $\mathbb{R}^{k}$.

We may assume that $f, g\ge 0$. 
	 Using the identity \eqref{identity} for the hypersurface $S^{\bl}_{2}=\{(y,z)\in \mathbb{R}^{n}\times \mathbb{R}^{n}:\Phi_2^\bl(y,z):=|y|^{\bl_1}+|z|^{\bl_2}-1=0\}$, we get 
	\begin{eqnarray*}
			\int_{S_{2}^{\bl}}f(x-t_{1}y)g(x-t_{2}z)~\frac{d\mu(y,z)}{|\nabla\Phi_2^\bl(y,z)|}
			 =\int_{\mathbb{R}^{n}}\int_{\mathbb{R}^{n}}f(x-t_1y)g(x-t_2z)\delta(\Phi_2^\bl)dzdy . 
		\end{eqnarray*}
	Since  $|\nabla\Phi_2^\bl(y,z)|^{2}=\bl^{2}_{1}|y|^{2(\bl_{1}-1)}+\bl^{2}_{2}|z|^{2(\bl_{2}-1)}\neq0$ and $\bl_1,\bl_2\ge 1$, we see   $c_{\bl}\leq|\nabla\Phi_2^\bl(y,z)|^{2}\leq C_{\bl}<\infty$ for all $(y,z)\in S_2^\bl$ and  some positive  constants $c_{\bl},C_{\bl}$.  Thus, it follows that 
\begin{eqnarray*}
			\mathcal{A}^{\bl}_{\bt}(f,g)(x) 
		\sim\int_{\mathbb{R}^{n}}\int_{\mathbb{R}^{n}}f(x-t_1y)g(x-t_2z)\delta(\Phi_2^\bl)dzdy . 
		\end{eqnarray*}	
Note that the left hand side is equal to	$\int_{\mathbb{R}^{n}}f(x-t_1y)(\int_{\mathbb{R}^{n}}g(x-t_2z)\delta(\Phi_2^\bl(y, \cdot)dz)dy$. Thus, fixing $y$ and using the identity \eqref{identity} with $\Phi_2^\bl(y,\cdot)$, we have  
	\begin{eqnarray*}
		\mathcal{A}^{\bl}_{\bt}(f,g)(x) 
		\sim \int_{|y|<1}f(x-t_{1}y)  \int_{\Omega_y}g(x-tz)\frac{d\mu_y^\bl (z)}{|\nabla_z\Phi_2^\bl(y,z)|}~dy,
	\end{eqnarray*}
	where $\Omega_{y}=\Phi_2^\bl(y,\cdot)^{-1}(0)\subset \mathbb{R}^{n}$ and $\mu_y^\bl$ is the induced surface measure on  $\Omega_{y}$.  	

For $\bl=(\bl_{1},\bl_{2})\in [1,\infty)^2$, 
we set 
\Be\label{weights} \omega_\bl (y) =  (1-|y|^{\bl_1})^{1/\bl_2},  \quad    \tilde \omega_\bl (y) =  (1-|y|^{\bl_2})^{1/\bl_1}. \Ee
Also, we set \Be\label{newaverage}\mathfrak Ag(x,t)=\int_{\mathbb{S}^{n-1}}g(x-t\theta)~d\theta.\Ee
Note that $\Omega_{y}$ is the $(n-1)$ dimensional sphere $\Omega_{y}$ of radius $\omega_\bl(y)$ and  $|\nabla_z\Phi_2^\bl(y,z)|=\bl_2 \omega^{\bl_2-1}_\bl(y)\neq0$. 
Therefore, by scaling  we obtain 
	\Be 
	\label{gs}	
	\mathcal{A}^{\bl}_{\bt}(f,g)(x) 
		\sim \int_{|y|<1}f(x-t_{1}y)   \mathfrak Ag(x,t_{2}\omega_{\bl}(y))  \omega_{\bl}^{{n-\bl_2}}(y) dy.  	
	\Ee
	Interchanging the roles of $f$ and $g$,  we also have  
	\Be
	\label{fs}
		\mathcal{A}^{\bl}_{\bt}(f,g)(x)\sim \int_{|z|<1}g(x-t_{2}z)    \mathfrak Af(x,t_{1}\tilde{\omega}_{\bl}(z))  \tilde \omega_\bl^{n-\bl_{1}} (z)dz.  
	\Ee

Depending on whether $\bl_1, \bl_2$ are less than or greater than the dimension $n$, we consider the following three cases, separately:
\begin{align*}
&(\mathrm A): \qquad \ \max\{\bl_1, \bl_2\}\leq n,  
\\ & (\mathrm B): \min\{\bl_1, \bl_2\}\leq n<\max\{\bl_1, \bl_2\}, 
\\ & (\mathrm C): \qquad \ n<\min\{\bl_1, \bl_2\}.
\end{align*}

	\subsection*{Case $(\mathrm A): \max\{\bl_1, \bl_2\}\leq n$} In this case, we note that $\mathcal{P}^{\bl}=\mathcal{P}$ and $\omega_{\bl}^{{n-\bl_2}}$ is bounded. Thus,  \eqref{gs} gives the  inequality  
	\Be
	\label{ggs}
	\mathfrak{M}^{\bl}(f,g)(x)\lesssim Mf(x)\mathcal M_{s}g(x),
	\Ee
	where $M$ denotes the Hardy--Littlewood  maximal function.  
	Using H\"older's inequality, by $L^{p}$-boundedness of  $M$ (for $1<p\leq\infty$) and  the  spherical maximal function $\mathcal M_{s}$ (for ${n}/({n-1})<p\leq\infty$), we see that $\mathfrak{M}^{\bl}$ maps $L^{p}\times L^{q}\rightarrow L^{r}$ for $({1}/{p},{1}/{q})\in [0,1)\times [0,\frac{n-1}{n})$. 
	Similarly, since $\bl_{1}\leq n$,  from \eqref{fs} we have
\Be
	\label{ffs} \mathfrak{M}^{\bl}(f,g)(x)\lesssim \mathcal M_{s}f(x)Mg(x).
	\Ee
	Therefore, H\"older's inequality and  boundedness of $M$ and $\mathcal M_{s}$ show that $\mathfrak{M}^{\bl}$ is bounded from $L^{p}\times L^{q}\rightarrow L^{r}$, for $({1}/{p},{1}/{q})\in [0,\frac{n-1}{n})\times  [0,1)$.  
	Hence, interpolation (for example, see \cite{Grafakosmodern}) gives $L^{p}\times L^{q}\rightarrow L^{r}$ bound on $\mathfrak{M}^{\bl}$ for all $({1}/{p},{1}/{q})\in \mathcal{P}$.

	\subsection*{Case $(\mathrm B): \min\{\bl_1, \bl_2\}\leq n<\max\{\bl_1, \bl_2\}$} 
	 We only consider the case $\bl_{1}\leq n<\bl_{2}$. The other case $(\bl_{2}\leq n<\bl_{1})$ can be handled  in the same manner by interchanging the roles of $\bl_{1}$ and $\bl_{2}$. We first denote some vertices of  $\mathcal{P}^{\bl}$ as follows:
	 \[  H=(0,1), \ P=\Big(\frac{n-1}{n},1\Big),\ Y=\Big(1-\frac{1}{n}+\frac{1}{\bl_{2}}, 1-\frac{1}{\bl_{2}}\Big),\  B=\Big(1-\frac{1}{n}+\frac{1}{\bl_{2}},0\Big).\]  
	 We also set $Q=(1,\frac{n-1}{n})$. (See  Figure \ref{caseB}.)

	 	\begin{figure}[t]
		\begin{tikzpicture}[scale=4, font=\small]
				\fill[lightgray](0,0)--(0.8,0)--(0.8,0.7)--(0.5,1)--(0,1)--cycle;
			\draw[densely dotted] (0,1)node[left]{$H$}--(1,1);
			\draw[densely dotted] (1,0)--(1,1);
			\draw[thin][->]  (0,0) --(1.2,0) node[right]{$1/p$};
			\draw[black,thin][->]  (0,0) --(0,1.2) node[left]{$1/q$};
			\draw[densely dotted] (0.5,0)
			--(0.5,1)node[above]{$P$};
			\draw[densely dotted] (0,0.5)--(1,0.5)node[right]{$Q$};
			\draw[densely dotted] (0.8,0)node[below]{$B$}--(0.8,1);
			\draw[densely dotted] (0.5,1)--(1,0.5);
			\draw[densely dotted] (0.8,0)--(0.8,0.7) node[right]{$Y$};
		\end{tikzpicture}
	\caption{The set ${\mathcal{P}}^{\bl}$  for  $\bl_{1}\leq n<\bl_{2}$.}
	\label{caseB}
	\end{figure}

	Since $\bl_{2}>n$,  the weight $ \omega_\bl^{n-\bl_{2}} $ in \eqref{gs} is not bounded. We decompose the average away from the unit sphere. 
	For $k\ge 1$, let us set
		$\chi_k^{a}(y)=\chi_{[2^{-k}, 2^{1-k}]}(1-|y|^a)$. Then, using \eqref{gs}, we have
	 	\begin{eqnarray}
		\label{sum}
	\mathcal{A}^{\bl}_{\bt}(f,g)(x) \sim \sum_{k\ge 1} \mathcal A_{\bt}^{k}(f,g)(x),
				\end{eqnarray}
	where 
		
		$$\mathcal A_{\bt}^{k}(f,g)(x)=  \int  f(x-t_{1}y)    \chi_k^{\bl_1}(y)  \omega_\bl^{n-\bl_{2}}(y)    \mathfrak Ag(x,t_{2}\omega_{\bl}(y)) ~ dy.$$
	Since $ \chi_k^{\bl_1}(y)  \omega_\bl^{n-\bl_{2}} \le 2^{k(\bl_{2}-n)/{\bl_2}}\chi_k^{\bl_1},$ we get 
	\Be \label{mgs}
		\mathcal M^k(f,g)(x):=\sup_{\bt\in (0,\infty)^2 } |\mathcal A_{\bt}^{k}(f,g)(x)|
		 \lesssim  2^{k(\bl_{2}-n)/\bl_{2}}Mf(x)\mathcal M_{s}g(x). 
	\Ee

For $(y,z)\in S_{2}^{\mathbf a}$, $1-|y|^{\bl_1}\in [2^{-k}, 2^{1-k}]$ if and only if $|z|^{\bl_2}\in [2^{-k}, 2^{1-k}]$. 
	Therefore, interchanging the roles of  $f$ and $g$ (cf., \eqref{fs}) we see that $\mathcal A_{\bt}^{k}(f,g)(x)$ equals a constant times 
	\Be
	\label{afs}
		 \int_{2^{-k}\leq |z|^{\bl_2}\leq 2^{(1-k)}}g(x-t_{2}z)\tilde \omega_\bl^{{n-\bl_1}}(z)\mathfrak Af(x, t_{1}\tilde\omega(z))\, dz.
		\Ee
	Since $\bl_{1}\leq n$, $\tilde \omega_\bl^{({n-\bl_{1}})/{\bl_{1}}}(z)\leq 1$. So, this gives  
	\Be \label{mfs}
		\mathcal{M}^{k}(f,g)(x) \lesssim  2^{-kn/\bl_{2}}\mathcal M_{s} f(x) Mg(x). 
	\Ee
	Now, using \eqref{mgs}, H\"older's inequality  and  boundedness of $M$ and $\mathcal M_{s}$ yield  
	\[  \| \mathcal{M}^{k}(f,g)\|_r  \lesssim  2^{k(\bl_{2}-n)/\bl_{2}}  \|f\|_p\|g\|_q\] 
	for  $({1}/{p},{1}/{q})\in [0,1)\times [0,\frac{n-1}{n})$. Similarly, from \eqref{mfs}  we get   
	\[  \| \mathcal{M}^{k}(f,g)\|_r  \lesssim 2^{-kn/\bl_{2}}   \|f\|_p\|g\|_q\] 
 for $({1}/{p},{1}/{q})\in [0,\frac{n-1}{n})\times [0,1)$. Interpolation gives 
$  \| \mathcal{M}^{k}(f,g)\|_r  \lesssim 2^{-\epsilon k}   \|f\|_p\|g\|_q$ for some $\epsilon>0$ 
 if $({1}/{p},{1}/{q})$ is contained in $\mathcal P^\bl$ (i.e.,  the gray region in Figure \ref{caseB} except the boundary line segments $[H,P],$ $[P,Y]$, and $[B,Y]$). 
 Since  $\mathfrak{M}^{\bl}(f,g)\lesssim  \sum_{k\ge 1}  \mathcal{M}^{k}(f,g)$, we get the desired estimate for $\mathfrak{M}^{\bl}$ for $(1/p, 1/q)\in \mathcal P^\bl$.

\begin{rem} When $n\ge 3$, it is possible to show the weaker  $L^{p,1}\times L^{q,1} \to L^{r,\infty}$ bound for $(1/p,1/q)$ on the line segments $[H,P],$ $[P,Y]$, and $[B,Y]$, making use of the 
weak $L^1$ bound on $M$, the endpoint estimate  $L^{{n}/({n-1}),1}\to L^{{n}/({n-1}), \infty}$ for $\mathcal M_{s}$ which is due to Bourgain (\cite{Eunhee} and {\cite[Lemma 2.6]{SanghyukLee1}}).  \end{rem}

	\subsection*{Case $(\mathrm C):  n<\min\{\bl_1, \bl_2\}$}  The desired estimates in this case are less straightforward than those in the previous cases.  The hypersurface $S_{2}^{\bl}$ becomes more degenerate as $\bl_1$ and $\bl_2$ increase. 
	We now distinguish the two cases 
	\Be  
	\label{subcase}
	{1}/{\bl_{1}}+{1}/{\bl_{2}}> {1}/{n},
	\Ee and ${1}/{\bl_{1}}+{1}/{\bl_{2}}\le  {1}/{n}$. 
	We provide the proof only for the former case.  
	For the latter case,  $\mathcal P^\bl$ becomes a rectangle  and does not intersect the line segment $[P, Q]$ (see Figure \ref{caseB}). 
    However,  there is no difference between the proofs of the two cases.  For the rest of the proof, we assume 
    that \eqref{subcase} holds.

		\begin{figure}[t]
		\begin{tikzpicture}[scale=4, font=\small]
			\draw[densely dotted] (0,1)node[left]{$H$}--(1,1);
			\draw[densely dotted] (1,0)node[below]{$G$}--(1,1);
			\fill[lightgray] (0,0)--(0.8,0)--(0.8,0.7)--(0.7,0.8)--(0,0.8)--cycle;
			\draw[thin][->]  (0,0)node[left]{$O$} --(1.2,0) node[right]{$1/p$};
			\draw[thin][->]  (0,0) --(0,1.2) node[left]{$1/q$};
			\draw(0.5,1)node[above]{$P$};
			\draw(1,0.5)node[right]{$Q$};
			\draw[densely dotted] (0.8,0)node[below]{$B$}--(0.8,0.7);
			\draw[densely dotted] (0.5,1)--(1,0.5);
			\draw(0,0.8)node[left]{$A$};
			\draw(0.8,0.7) node[right]{$Y$};
			\draw[densely dotted] (0,0.8)--(0.7,0.8)node[above]{$X$};
		\end{tikzpicture}
	\caption{The set $\mathcal{P}^{\bl}$ when  $\bl_1, \bl_2 >n$ and ${1}/{\bl_{1}}+1/{\bl_{2}}> 1/{n}$.}
\label{caseC}
	\end{figure}

	As before, we start by denoting  some vertices of  ${\mathcal{P}}^{\bl}$: 
\[ O=(0,0), \  A=\Big(0,1-\frac{1}{n}+\frac{1}{\bl_{1}}\Big),   \ 
		X=\Big(1-\frac{1}{\bl_{1}},1-\frac{1}{n}+\frac{1}{\bl_{1}}\Big). \] 
		We also set $ G=(1,0).$
		Using the decomposition \eqref{sum}, 
we have \eqref{mgs}. Since $\bl_{1} > n$,  $\tilde \omega_\bl^{({n-\bl_{1}})/{\bl_{1}}}(z)$ is no longer bounded.
However, $\tilde \omega_\bl^{({n-\bl_{1}})/{\bl_{1}}}(z)\le C$ if $|z|\le 2^{(1-k)/\bl_2}$ for $k\ge 2$.  From \eqref{afs}  
we see \eqref{mfs} holds for $k\ge 2$.  Thus, by the same argument as before, we have  
$  \| \mathcal{M}^{k}(f,g)\|_r  \lesssim 2^{-\epsilon k}   \|f\|_p\|g\|_q$, $k\ge 2$ for some $\epsilon>0$ as long as 
$({1}/{p},{1}/{q})$ is contained in  the closed pentagon $\mathfrak P$ with vertices $O, H, P, Y, B$ from which the segments $[H,P]$, $[P,Y]$, and $[B,Y]$ are removed. So 
we have
 \[ \textstyle \|  \sum_{k\ge 2}  \mathcal{M}^{k}(f,g)\|_r\lesssim \|f\|_p\|g\|_q.\]
 if $({1}/{p},{1}/{q})$ is contained in $\mathfrak P$, which contains  $\mathcal P^\bl$.

Therefore,  we only need to consider $\mathcal M^1$.  By  \eqref{mgs} with $k=1$, we see that $\mathcal M^1$ is bounded from  $L^{p}\times L^{q}$ to $L^{r}$ for $({1}/{p},{1}/{q})\in [0,1)\times [0,\frac{n-1}{n})$. Concerning $\mathcal A_{\bt}^{1}$, from  \eqref{afs}, we note that $|z|^{\bl_2}\in [2^{-1},1]$ if and only if $1-|z|^{\bl_2}\in [0, 2^{-1}]$. Thus, we can decompose  
	\Be
	\mathcal A_{\bt}^{1}(f,g)(x)=    \sum_{j\ge 2}  \tilde{\mathcal A}_{\bt}^{j}(f,g)(x),
		\Ee
		where 
		\[  \tilde{\mathcal A}_{\bt}^{j}(f,g)(x)=\int g(x-t_{2}z)  \chi_j^{\bl_2}(z)\tilde \omega_\bl^{{n-\bl_1}}(z) \mathfrak Af(x,t_{1}\tilde{\omega}_{\bl}(z))dz.\] 

Since $ \chi_j^{\bl_2}(z)  \tilde \omega_\bl^{({n-\bl_{1}})/{\bl_{1}}}(z) \le 2^{j(\bl_{1}-n)/{\bl_1}}\chi_j^{\bl_2},$ 
we have 
\Be \label{eqn9}
	\tilde{\mathcal M}^j (f,g)(x)	:=\sup_{\bt\in (0,\infty)^2} \tilde{\mathcal A}_{\bt}^{j}(f,g)(x)  \leq C2^{j(\bl_{1}-n)/\bl_{1}}  \mathcal M_{s}f(x) Mg(x). 
\Ee 
 For $(y,z)\in S_{2}^\bl$, $1-|z|^{\bl_2}\in [2^{-j}, 2^{1-j}]$ is equivalent to $|y|^{\bl_1}\in [2^{-j}, 2^{1-j}]$. 
As before, interchanging the roles of $f$ and $g$, we see that  $\tilde{\mathcal A}_{\bt}^{j}(f,g)(x)$ is bounded above by a constant times 
\[   \int_{ 2^{-j}\le |y|^{\bl_1} \le 2^{1-j}}   f(x-t_{1}y)     \omega_\bl^{n-\bl_{2}}(y)   \mathfrak Ag(x,t_{2}\omega_{\bl}(y))    dy.\]
Note $ \omega_\bl^{n-\bl_{2}}(y)\le C$ if $|y|^{\bl_1} \le 2^{1-j}$ and $j\ge 2$. Thus, we obtain 
	\begin{eqnarray} \label{eqn10}
	\tilde{\mathcal M}^j (f,g)(x)		\leq C2^{-jn/\bl_{1}}Mf(x)\mathcal M_{s}g(x).	\end{eqnarray} 
Thus, by the same argument as before we have   $\|\tilde{\mathcal{M}}^{j}(f,g)\|_r\le  C2^{-jn/\bl_{1}}\|f\|_p\|g\|_q$   for $({1}/{p},{1}/{q})\in [0,1)\times [0,\frac{n-1}{n})$.
Similarly, using \eqref{eqn9}, we also have  $\|\tilde{\mathcal{M}}^{j}(f,g)\|_r\le C2^{j(\bl_{1}-n)/\bl_{1}} \|f\|_p\|g\|_q$  for $({1}/{p},{1}/{q})\in [0,\frac{n-1}{n})\times  [0,1)$. 
	Hence, by those bounds and interpolation, it follows that   the operator $\sum_{j\geq2}\tilde{\mathcal{M}}^{j}$ boundedly maps $L^{p}\times L^{q}\rightarrow L^{r}$ if $({1}/{p},{1}/{q})$ belongs to the closed pentagon $\tilde{\mathfrak P}$ with vertices $O,A,X,Q, G$ from which the segments $[A,X]$, $[X,Q]$, and $[Q,G]$ are removed (see Figure \ref{caseC}).  
	
	 Therefore,  \eqref{theestimate}  holds for $({1}/{p},{1}/{q})\in {\mathfrak P}\cap \tilde{\mathfrak P}$, which is equal to $\mathcal{P}^{\bl}$.

\subsection{Sharpness of Theorem \ref{Theorem1}}
\label{sec:2.2}
In this section, as mentioned in Remark \ref{rem1}, we show that the range of $(p,q)$ for the estimate  \eqref{theestimate}  in Theorem \ref{Theorem1}   is sharp except for some borderline cases. 
That is to say, we  prove that   \eqref{theestimate}  implies 
\begin{align}
\label{nec1}		&\frac{1}{r}=\frac{1}{p}+\frac{1}{q}\leq \frac{2n-1}{n}, 
		\\
\label{nec2} 		&\frac{1}{p}\leq 1-\Big(\frac{1}{n}-\frac{1}{\bl_{2}}\Big),   \quad \bl_2 > n,
		\\
\label{nec3} 		& \frac{1}{q}\leq 1-\Big(\frac{1}{n}-\frac{1}{\bl_{1}}\Big),  \quad \bl_1 > n. 
		\end{align}
In fact, we show that  
the estimate 
\begin{eqnarray}\label{theestimate1}
\Vert \widetilde{\mathfrak{M}}^{\bl}(f,g)\Vert_{L^{r}(\mathbb{R}^{n})}\lesssim \Vert f\Vert_{L^{p}(\mathbb{R}^{n})}\Vert g\Vert_{L^{q}(\mathbb{R}^{n})}
\end{eqnarray}
implies \eqref{nec1}, \eqref{nec2}, and \eqref{nec3} (see\eqref{1-parameter}~for the  definition of $\widetilde{\mathfrak{M}}^{\bl}$).

We first show \eqref{nec1}.  Set $h(s)=s^{\bl_{1}}+s^{\bl_{2}}-1$ for $s\ge 0$ and let  $s_\ast$ denote the unique point such that 
 $h(s_\ast)=0$ and $s_\ast\in (0,1)$. 
	Taking  a small positive number $\epsilon_0$ such that $s_\ast+\epsilon_0<1$, 
		we set 
	\[f=\chi_{B(0,2\delta/s_\ast)}, \quad g=\chi_{B(0,C_{1}\delta)}\] for $0<\delta\ll\epsilon_{0}$ and a constant $C_{1}>0$.
	Taking $t_{1}=t_{2}={|x|}/{s_\ast}$,  from  \eqref{gs} we have
	\[
		\widetilde{\mathfrak M}^\bl(f,g)(x)\gtrsim \int_{|y|\leq1}f\Big(x-\frac{|x|}{s_\ast}y\Big)\omega_\bl^{n-\bl_{2}}(y) \mathfrak Ag\Big(x,\frac{|x|}{s_{*}}\omega_{\bl}(y)\Big)~
	dy. 
	\] 
	We consider $
		\mathrm A=\{x\in\mathbb{R}^{n}:s_\ast\leq |x|\leq s_\ast+\epsilon_{0}\}.$
	If $|x-(s_\ast^{-1}|x|)y |\le \delta $ for  $x\in \mathrm A$,  $|s_\ast-|y||\le \delta$. Thus, recalling $\omega_\bl(y)=(1-|y|^{\bl_{1}})^{1/{\bl_{2}}}$, we see that $\omega_\bl(y)$
	in the integral is $\sim 1$ if $\delta$ is small enough. 
		Besides, our choice of $s_\ast$ ensures that  $|s_\ast-\omega_\bl(y)|=|(1-s_\ast^{\bl_1})^{1/\bl_2}-(1-|y|^{\bl_{1}})^{1/{\bl_{2}}}|\le C_2\delta$ for some constant $C_2$. 
		Therefore, for $x\in A$,   we have
	\[
		\widetilde{\mathfrak M}^\bl(f,g)(x)\gtrsim \int_{\{ |y|\leq1, |y-|x|^{-1} s_\ast x|\le\delta\}} 
		 \int_{\mathbb{S}^{n-1}\cap\{\theta:  ||x|^{-1} x- \theta|\le C\delta\} } d\sigma(\theta)dy
	\]
	for a constant $	C>0$ large enough. This gives $\widetilde{\mathfrak M}^\bl(f,g)(x)\gtrsim  \delta^{2n-1}$ for $x\in \mathrm A$.  Thus, the estimate \eqref{theestimate1} implies that $\delta^{2n-1}\lesssim \delta^{n({1}/{p}+{1}/{q})}$. Therefore, taking $\delta\rightarrow0$ we get 
	\eqref{nec1}

We now show \eqref{nec2}.  For $0<\delta<1/2$ we take  $f=\chi_{B(0,C\delta)}$ with a constant $C>2$ and $g=\chi_{B(0,10)}$. 
From \eqref{fs}, taking $t_{1}=t_{2}=|x|$, we have  
	\[
		\widetilde{\mathfrak M}^\bl(f,g)(x)
			\geq \int_{|z|^{\bl_{2}}\leq\delta}g(x-|x|z) \tilde \omega_\bl^{n-\bl_{1}} (z) \mathfrak Af(x,|x|\tilde{\omega}_{\bl}(z))dz. 
	\]
Recalling \eqref{weights}, note that	$\tilde \omega_\bl(z)\sim 1$  and $|\tilde{\omega}_\bl(z) -1|\lesssim \delta$ in the integral since $|z|^{\bl_{2}}\leq\delta$.  Thus, $\widetilde{\mathfrak M}^{\bl}(f,g)(x)\gtrsim \delta^{\frac{n}{\bl_{2}}+n-1}$ for $1\leq |x|\leq2$. This and \eqref{theestimate1} yield $\delta^{\frac{n}{\bl_{2}}+n-1}\lesssim \delta^{\frac{n}{p}}$. Therefore, taking $\delta\rightarrow0$, we get   \eqref{nec2}. 
			In a similar way one can also obtain \eqref{nec3}. 

	To show  that $\widetilde{\mathfrak M}^{\bl}$ does not boundedly map $L^{1}\times L^{\infty}\rightarrow L^{1}$ and $L^{\infty}\times L^{1}\rightarrow L^{1}$, it is enough to consider the case $\bl_1, \bl_2 \leq n$ thanks to 
	\eqref{nec2} and \eqref{nec3}. 
	Let $f\in L^{1}$ and $g(x)=1$. Using \eqref{gs}, we have 
	\[
		\mathcal{A}^{\bl}_{\bt}(f,g)(x)		\gtrsim \int_{|y|\leq1}f(x-t_{1}y)(1-|y|^{\bl_{1}})^{\frac{n-\bl_{2}}{\bl_{2}}}~dy.
	\]
	Thus, $\widetilde{\mathfrak M}^{\bl}(f,g)(x)\gtrsim Mf(x)$. Since the Hardy--Littlewood maximal function $M$ does not map $L^{1}\rightarrow L^{1}$, this implies that $\widetilde{\mathfrak M}^{\bl}$ fails to map $L^{1}\times L^{\infty}\rightarrow L^{1}$. Similarly, interchanging the roles of $f$ and $g$, one sees  that $\widetilde{\mathfrak M}^{\bl}$ does not map $L^{\infty}\times L^{1}\rightarrow L^{1}$.

\subsection{Proof of Proposition  \ref{proposition}}
In order to prove Proposition  \ref{proposition}, we make use of the following theorems. 

\begin{thm}[\cite{Littman}]\label{Littmanlemma}
	Let $\mathcal{S}$ be a compact surface which has $k\geq1$ non-vanishing principal curvatures at every point, and let $d\sigma_{\mathcal{S}}$ be a smooth surface measure on $\mathcal{S}$. Then, for all multi-indices $\alpha$, we have
	$$|\partial^{\alpha}\widehat{d\sigma_{\mathcal{S}}}(\xi)|\leq C_{\alpha}(1+|\xi|)^{-k/2}.$$
\end{thm}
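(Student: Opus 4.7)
The plan is to prove the Fourier decay estimate by the standard combination of a smooth partition of unity, non-stationary phase away from critical points, and the method of stationary phase restricted to the nondegenerate directions of the Hessian. Since $\partial^{\alpha}\widehat{d\sigma_{\mathcal{S}}}(\xi) = \int_{\mathcal{S}}(-ix)^{\alpha}e^{-ix\cdot\xi}\,d\sigma_{\mathcal{S}}(x)$ and $\mathcal{S}$ is compact, the polynomial factor $x^{\alpha}$ only modifies the amplitude by a smooth bounded function, so it is enough to prove $|\widehat{(a\,d\sigma_{\mathcal{S}})}(\xi)|\leq C(1+|\xi|)^{-k/2}$ for an arbitrary smooth compactly supported amplitude $a$. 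For $|\xi|\leq 1$ the bound is trivial, so throughout I assume $|\xi|\geq 1$.

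First I would use a smooth partition of unity $\{\psi_{j}\}$ on $\mathcal{S}$ with each $\psi_{j}$ supported in a small coordinate patch (finitely many by compactness). In each patch, rotate coordinates so that the tangent hyperplane at the center becomes $\{x_{2n}=0\}$ and write $\mathcal{S}$ as a graph $u\mapsto(u,\phi(u))$ with $\phi(0)=0$, $\nabla\phi(0)=0$. Since $\mathrm{Hess}\,\phi(0)$ has rank at least $k$, after an orthogonal change of the $u$ variables I may split $u=(u',u'')$ with $u'\in\mathbb{R}^{k}$ and $\partial^{2}_{u'u'}\phi(0)$ nonsingular. The patch contribution to the Fourier transform is then the oscillatory integral
\[
I(\xi)=\int e^{-i(\xi'\cdot u+\xi_{2n}\phi(u))}\,a(u)\,du,\qquad \xi=(\xi',\xi_{2n}).
\]
I next decompose frequency space via a partition of unity on $S^{2n-1}$ into finitely many narrow cones. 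On any cone whose central direction is \emph{not} close to the normal $\pm e_{2n}$ of $\mathcal{S}$ at the base point, I can arrange $|\xi'+\xi_{2n}\nabla\phi(u)|\gtrsim|\xi|$ uniformly on $\mathrm{supp}\,a$ (after shrinking the patch so that $|\nabla\phi|$ is small). Repeated integration by parts with the transport operator $L=(i|\nabla_{u}\Psi|^{2})^{-1}\nabla_{u}\Psi\cdot\nabla_{u}$ then yields rapid decay $|\xi|^{-N}$ for every $N$, far better than required.

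The essential case is the cone where $|\xi'|\leq c|\xi_{2n}|$ for a sufficiently small constant $c$; write $\lambda=\xi_{2n}$, $\eta=\xi'/\xi_{2n}$, so $|\eta|\leq c$ and $|\lambda|\sim|\xi|$. The phase $\Psi_{\eta}(u)=\eta\cdot u+\phi(u)$ has critical point equation $\nabla\phi(u)=-\eta$; since $\partial^{2}_{u'u'}\phi(0)$ is nonsingular, the implicit function theorem produces a smooth branch $u'=u'_{*}(u'',\eta)$ for small $|\eta|$, with Hessian $\partial^{2}_{u'u'}\phi$ uniformly nondegenerate on a neighborhood. I would then apply Hörmander's method of stationary phase in the $u'$ variable, with parameters $(u'',\eta)$ and large parameter $\lambda$, to obtain
\[
\int e^{-i\lambda\Psi_{\eta}(u',u'')}a(u',u'')\,du' = |\lambda|^{-k/2}\,b(u'',\eta,\lambda),
\]
with $b$ smooth in $u''$ and bounded uniformly in $(\eta,\lambda)$ together with all its derivatives. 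Integrating over $u''$ on the compact support of $a$ yields $|I(\xi)|\lesssim|\lambda|^{-k/2}\sim(1+|\xi|)^{-k/2}$ on this cone, completing the proof for $\alpha=0$; the general $\alpha$ follows verbatim by absorbing $x^{\alpha}$ into $a$. The one technical point deserving care is the uniformity of the stationary phase expansion in the parameters $(u'',\eta)$ and, in particular, the fact that the decay rate is governed by the rank $k$ and not by the full dimension $2n-1$; this is exactly the content of the parametric stationary phase principle, and no decay in the $2n-1-k$ flat directions is needed because the support of the amplitude in $u''$ is already compact.
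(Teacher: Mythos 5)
The paper does not prove this statement at all --- it is quoted directly from Littman's paper \cite{Littman} --- so there is nothing internal to compare against. Your argument (reduction to a smooth amplitude, graph coordinates, a conic split into a non-stationary region handled by integration by parts and a near-normal region handled by stationary phase in the $k$ nondegenerate variables with the remaining variables and $\eta=\xi'/\xi_{2n}$ as parameters) is the standard and correct proof of this estimate, with the key uniformity issue correctly identified and resolved by the parametric stationary phase theorem.
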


Let $\{T_{t}\}_{t>0}$ be a family of Fourier multiplier operators which are defined by  $\widehat{T_{t}f}(\xi)=\hat{f}(\xi)m(t\xi)$ for  $m\in L^{\infty}(\mathbb{R}^{n})$. 
\begin{thm}[{\cite[Theorem B]{Rubiodefrancia}}]\label{Rubiotheorem}
	Let $s$ be an integer greater than ${n}/{2}$ such that $m\in C^{s+1}(\mathbb{R}^{n})$ and $|D^{\alpha}m(\xi)|\leq C|\xi|^{-a}$ for $|\alpha|\leq s+1$ and some  $a>1/2$. 
	Then, the maximal operator $T^{*}f(x)=\sup_{t>0} |T_{t}f(x)|$ is bounded on  $L^{p}$ if \[p_{0}:=2n/(n+2a-1)<p<p_{\infty}:=(2n-2)/({n-2a}),\] where we set $p_{0}=1$ if $a\geq \frac{n+1}{2}$ and $p_{\infty}=\infty$ if $a\geq n/2$.
\end{thm}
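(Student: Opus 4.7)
My plan is to prove the theorem via the classical Stein--Bochner $g$-function strategy, combined with a Littlewood--Paley decomposition in frequency. First, since $m\in C^{s+1}$ is continuous at the origin, I replace $m$ by $m-m(0)$; the contribution of the constant $m(0)$ to $T^*f$ is dominated pointwise by $|m(0)|\,|f|$ and hence is $L^p$-bounded for all $p\geq1$, so I may henceforth assume $m(0)=0$. For Schwartz $f$ and fixed $x$, set $\phi(t)=T_tf(x)$. Since $\phi(0)=0$, the identity $\phi(t)^2=2\int_0^t\phi(s)\phi'(s)\,ds$ and Cauchy--Schwarz in the measure $ds/s$ yield the pointwise bound
\[
T^*f(x)^2 \;\leq\; 2\,G_m f(x)\cdot G_{\tilde m}f(x),
\]
where $G_m f(x)^2=\int_0^\infty |T_sf(x)|^2\,ds/s$ and the symbol for the second factor is $\tilde m(\xi)=\xi\cdot\nabla m(\xi)$. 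Note that $\tilde m$ only satisfies $|\tilde m(\xi)|\lesssim |\xi|^{1-a}$, so the Bochner step couples a well-decaying symbol with a worse one; their geometric mean will nevertheless be summable after a dyadic decomposition, provided $a>1/2$.

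Next I would make a Littlewood--Paley decomposition $m=\sum_{k\geq 0}m_k$ with $\supp m_k\subset\{|\xi|\sim 2^k\}$, and let $G_k$, $\widetilde G_k$ denote the $g$-functions associated with $m_k$ and $\tilde m_k$. Plancherel together with the scale invariance of $\int_0^\infty |m_k(s\xi)|^2\,ds/s$ in $\xi$ gives
\[
\|G_k f\|_{L^2}\lesssim 2^{-ka}\|f\|_{L^2},\qquad \|\widetilde G_k f\|_{L^2}\lesssim 2^{k(1-a)}\|f\|_{L^2}.
\]
By the pointwise bound and Cauchy--Schwarz this yields $\|\sup_s|T^k_sf|\|_{L^2}\lesssim 2^{k(1/2-a)}\|f\|_{L^2}$, which is summable in $k$ precisely under the hypothesis $a>1/2$. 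To push the estimate off $L^2$, I use that the convolution kernel $K_k$ of the operator with symbol $m_k$ satisfies
\[
|K_k(x)|\lesssim 2^{k(n-a)}(1+2^k|x|)^{-(s+1)}
\]
(via $s+1$ integrations by parts on the Fourier side; note $s+1>n/2+1$). This yields standard Calder\'on--Zygmund estimates for $G_k$ and $\widetilde G_k$ viewed as vector-valued operators into $L^2((0,\infty),ds/s)$, so that one has uniform-in-$k$ bounds $\|G_k f\|_{L^p}\lesssim \|f\|_{L^p}$ for $1<p<\infty$. Complex interpolation with the sharp $L^2$ decay rates above then produces $\|G_k f\|_{L^p}\cdot \|\widetilde G_k f\|_{L^p}\lesssim 2^{-2k\varepsilon(p)}\|f\|_{L^p}^2$ in a range of $p$.

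The main obstacle is obtaining the sharp range $p_0<p<p_\infty$ with the explicit endpoints $p_0=2n/(n+2a-1)$ and $p_\infty=(2n-2)/(n-2a)$. These exponents do not come from na\"ive interpolation between $L^1$ and $L^2$; they require exploiting the full Sobolev regularity $s>n/2$ through weighted $L^2$ estimates on $K_k$, which amount to extracting $(n-1)/2$ derivatives tangentially along the sphere $|\xi|\sim 2^k$ and $1/2$ derivative radially. This anisotropic gain, combined with the pointwise decay $2^{-ka}$ of $m_k$, yields precisely the numerology $n+2a-1$ and $n-2a$ in the endpoints. The special conventions $p_0=1$ for $a\geq(n+1)/2$ and $p_\infty=\infty$ for $a\geq n/2$ correspond to the regime in which the geometric-mean bound $2^{k(1/2-a)}$ already sums absolutely without any interpolation; carrying out this sharp endpoint bookkeeping is the technical heart of the argument.
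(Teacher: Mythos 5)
The paper does not prove this statement at all: it is quoted as Theorem B of Rubio de Francia \cite{Rubiodefrancia} and invoked as a black box in the proof of Proposition \ref{proposition}, so there is no internal proof to compare against. Judged on its own terms, your proposal contains a genuine gap. The parts you do execute are fine: the Stein--Bochner majorization $(T^*f)^2\le 2\,G_mf\cdot G_{\tilde m}f$ with $\tilde m(\xi)=\xi\cdot\nabla m(\xi)$, and the Plancherel computations $\Vert G_kf\Vert_2\lesssim 2^{-ka}\Vert f\Vert_2$, $\Vert \widetilde G_kf\Vert_2\lesssim 2^{k(1-a)}\Vert f\Vert_2$, correctly give $\Vert\sup_t|T^k_tf|\Vert_2\lesssim 2^{k(1/2-a)}\Vert f\Vert_2$ and hence the $L^2$ case for $a>1/2$. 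But the content of the theorem is the explicit range $p_0<p<p_\infty$, and you do not derive it; you state that ``carrying out this sharp endpoint bookkeeping is the technical heart of the argument'' and stop there. A proof that defers its technical heart is not a proof.

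Moreover, the two concrete steps you propose for leaving $L^2$ do not work as stated. First, $s+1$ integrations by parts give $|K_k(x)|\lesssim 2^{k(n-a)}(1+2^k|x|)^{-(s+1)}$, but the hypothesis only guarantees $s+1>n/2+1$, so this envelope need not be integrable (e.g.\ $n=4$, $s=3$) and does not feed into Calder\'on--Zygmund theory; the condition $s>n/2$ is calibrated for $L^2$-weighted kernel estimates via Cauchy--Schwarz, not for pointwise integrable decay. Second, if one really had uniform $L^p$ bounds for $G_k,\widetilde G_k$ on all of $1<p<\infty$ together with the $L^2$ decay $2^{k(1/2-a)}$, interpolation would bound $T^*$ on the whole range $1<p<\infty$ whenever $a>1/2$. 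That conclusion is false: the multiplier $m(\xi)=e^{i|\xi|}(1+|\xi|^2)^{-a/2}$ satisfies all the hypotheses, yet already the single operator $T_1$ is unbounded on $L^p$ for $1/p<1/2-a/(n-1)$, i.e.\ for $p>p_\infty$. So your interpolation scheme cannot produce the theorem, and the exponents $2n/(n+2a-1)$ and $(2n-2)/(n-2a)$ must come from elsewhere --- in Rubio de Francia's argument, from weighted $L^2(|x|^\alpha dx)$ estimates for the square functions, with the admissible range of the power $\alpha$ converted into the range of $p$; none of that machinery appears in your outline.
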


	We begin with claiming  that the maximal function $\mathfrak{M}^{{S}}$  maps $L^{p}\times L^{\infty}\rightarrow L^{p}$, for $1<p\leq\infty$. The idea here is similar to the one in \cite{Grafakos1}. For $g\in L^{\infty}$, we have 
	\begin{eqnarray*}
		\mathfrak{M}^{{S}}(f,g)(x)\leq \Vert g\Vert_{L^{\infty}}\sup_{t>0}\Big|\int_{{S}}f(x-ty)~d\sigma(y,z)\Big|,
	\end{eqnarray*}
	where $d\sigma$ is the normalized surface measure on the surface ${S}$. By  Fourier inversion  we see
	\begin{eqnarray*}
		\int_{{S}}f(x-ty)\,d\sigma_{{S}}(y,z)
		=\int_{\mathbb{R}^{n}}\widehat{d\sigma}(t\xi,0)\widehat{f}(\xi) e^{2\pi\iota x\cdot\xi}\,d\xi,
	\end{eqnarray*}
	where  $\widehat{d\sigma}$ denotes the Fourier transform of  $d\sigma$.
	Now, we set $m(\xi)=\widehat{d\sigma}(\xi,0)$. Since $S$ has at least $ n+1$ nonvanishing  principal curvatures,  $m$ satisfies  $|\partial^{\alpha}m(\xi)|\leq C|\xi|^{-a}$ with $a=(n+1)/2$ by Theorem \ref{Littmanlemma}. 
	Using Theorem \ref{Rubiotheorem}, we see that $T^\ast$ is bounded on  $L^{p}$  for $1<p<\infty$.  Thus, it follows that $\mathfrak{M}^{{S}}$ is bounded from $L^{p}\times L^{\infty}$ to $L^{p}$ for $p>1$. Similarly,  by interchanging the roles of $f$ and $g$ it follows that $\mathfrak{M}^{{S}}$ maps $L^{\infty}\times L^{q}\rightarrow L^{q}$ for $q>1$. Therefore, interpolation completes the proof.

	\section{Proof of Theorem \ref{result2} and \ref{result1}}\label{sufficientpart2}
	
In this section, we prove  Theorem \ref{result2} and  \ref{result1}.  Sharpness of the ranges of $p,q$ in the theorems is shown by considering particular examples.

	\newcommand{\pn}{{L^p}}
	\newcommand{\rn}{{L^r}}
	\newcommand{\qn}{{L^q}}

\subsection{Proof of Theorem \ref{result2}}  
We first prove the sufficiency part of Theorem \ref{result2}. 
 Since $\gamma_2'(s)\neq 0$ on $\supp \psi$, changing variables $y=\gamma_2(s)$, we have 
\[
	\mathcal{M}^{\gamma}(f,g)(x)
		\le \Vert f\Vert_{L^{\infty}}\sup_{t_{2}>0}\int^{1}_{0} |g(x-t_{2}\gamma_{2}(s))\psi(s)| ds  \lesssim \Vert f\Vert_{L^{\infty}} Mg(x).
\]
Thus, by the Hardy--Littlewood maximal  inequality we have 
\Be\label{est-p0}
	\|\mathcal{M}^{\gamma}(f,g)\|_\qn  \lesssim \Vert f\Vert_{L^{\infty}} \|g\|_\qn
	\Ee
for $1<q\leq\infty$.
Similarly, since  $\gamma'_{1}(s)\neq0$  on $\supp \psi$, we get 
$
	\mathcal{M}^{\gamma}(f,g)(x)
	\lesssim Mf(x) \Vert g\Vert_{L^{\infty}} .
$
This gives 
\Be\label{est-q0} \|\mathcal{M}^{\gamma}(f,g)\|_\pn  \lesssim  \|f\|_\pn \Vert g\Vert_{L^{\infty}} \Ee
 for $1<p\leq\infty$.
By interpolation we obtain the desired $L^p\times L^q\to L^r$ boundedness for $1< p,q\leq\infty$ and ${1}/{r}={1}/{p}+{1}/{q}$.

We now  show  the necessity part. That is to say,  $\mathcal{M}^{\gamma}$ can not be bounded from $L^{p}\times L^{q}\rightarrow L^{r}$ if $r\leq1$. 
Since $\gamma'_{1}(s)\neq 0$ and $\gamma'_{2}(s)\neq0$  on $\supp \psi$, there is a point $s_0$ 
such that $\gamma_{1}(s_{0})\neq 0$, $\gamma_{2}(s_{0})\neq0$, and $\psi(s_0)> 0$. 
Thus,  we have 
\Be\label{compare} |\gamma_{1}(s_{0})-\gamma_{1}(s)| \sim |s-s_0|, \quad |\gamma_{2}(s_{0})-\gamma_{2}(s)|\sim |s-s_0|\Ee 
 if   $s\in [s_{0},s_{0}+\delta]$  with a constant $\delta>0$ small enough. We may also assume $\psi\gtrsim 1$ on the interval $ [s_{0},s_{0}+\delta]$. 
		We consider 
		\[ f(x)=|x|^{-1/p}\log^{-2/p}(|x|^{-1})\chi_{(-1,1)}, \quad  g(x)=|x|^{-1/q}\log^{-2/q}(|x|^{-1})\chi_{(-1,1)}.\]
		 Then,  it is easy to see that $f\in L^{p}(\mathbb{R})$ and $g\in L^{q}(\mathbb{R})$ for $0<p,q<\infty$.  Taking $t_1={x}/{\gamma_{1}(s_{0})}$ and $t_2={x}/{\gamma_{2}(s_{0})}$, 		 
		  we  have
		\Be 
		\label{max-}		\mathcal{M}^{\gamma}(f,g)(x)
			\gtrsim \int^{s_{0}+\delta}_{s_{0}} f\big(x-\frac{x\gamma_{1}(s)}{\gamma_{1}(s_{0})}\big)g\big(x-\frac{x\gamma_{2}(s)}{\gamma_{2}(s_{0})}\big)ds.\Ee
			Thus,  using \eqref{compare}, we have
			\[  \mathcal{M}^{\gamma}(f,g)(x)\gtrsim  \int^{s_{0}+c}_{s_{0}} |x(s_{0}-s)|^{-\frac1p-\frac1q}\log^{-\frac2p-\frac2q}\big(\frac{c}{x|s_{0}-s|}\big)
						ds \]
		for  $x\in(0, 1)$ and a small $c>0$. 
		Therefore,  after a change of variables  we get
		\begin{eqnarray*}
			\mathcal{M}^{\gamma}(f,g)(x)
			\gtrsim  x^{-1}\int^{cx}_{0}t^{-1/r}\log^{-2/r}(c/t)~dt\gtrsim   
			\begin{cases}
				 \,(x\log(c/x))^{-1}, &\ \text{if}\quad r=1,\\
				\qquad \infty, &\  \text{if}\quad r<1.
			\end{cases}
		\end{eqnarray*}
		So, it follows that  $\|\mathcal{M}^{\gamma}(f,g)\|_\rn=\infty$ if $r\le 1$ as desired.  
		When $p=\infty$ ($q=\infty$), taking $f=\chi_{(-1,1)}$ ($g=\chi_{(-1,1)}$, respectively), we get the same conclusion. 
		

	\begin{rem} 
	\label{rem-nec}
	By the same argument in the above showing the necessity part  we can also show that the maximal estimate \eqref{estimateforfinitetypecurve} fails if  $1/p+1/q=1/r\geq 1$ when 
	$\gamma(s)=(s,\gamma_{2}(s))$ and $\gamma_{2}$ is of type $m$ at $s_0$. The argument remains valid as long as we can 
	find an interval $[s_\ast, s_\ast+\delta]$ such that $\gamma'_{2}(s)\neq0$ on  $[s_\ast, s_\ast+\delta]$ for a small $\delta>0$ and $\psi>0$ on the interval $[s_\ast, s_\ast+\delta]$. 
	One can clearly find  such an interval, possibly, away from $s_0$ (see \eqref{near} below). 		
	\end{rem}	

	\newcommand{\ninf}{{L^\infty}}

	\subsection{Proof of Theorem \ref{result1}}
	Since $\gamma_{2}$ is of type $m$ at $s_0$, i.e.,  $\gamma'_{2}(s_{0})=\gamma''_{2}(s_{0})=\cdots=\gamma^{(m-1)}_{2}(s_{0})=0$ and  $\gamma^{(m)}_{2}(s_{0})\neq0$ in a small neighborhood of $s_0$, we have  
		\Be\label{near} 
		\gamma_{2}(s)=\gamma_{2}(s_{0})-(s-s_0)^{m}\phi(s),  \quad s\in  I_0:= [s_0-c, s_0+c], 
		\Ee
		for some $c>0$ where  
		$\phi$ is smooth and  $|\phi|>c_0$  on $I_0$ for a constant $c_0>0$. 
		We may assume  $\supp \psi\subset I_0$ by taking $\supp \psi$ to be small enough.

	\subsubsection{Proof of  $(i)$
	}  
	It is clear that  $\mathcal{M}^{\gamma}(f,g)(x)\lesssim Mf(x)\|g\|_{L^\infty}$. This gives \eqref{est-q0} 	
	for $1<p\le \infty$.  Since $\gamma_{2}(s_{0})=0$, changing variables $y=(s-s_0)^{m}\phi(s)$, we have  
	\begin{eqnarray*}
		\mathcal{M}^\gamma(f,g)(x)&\simeq& \|f\|_\ninf   \sup_{\bt\in (0,\infty)^2}\int^{c}_{-c} g\big(x-t_{2}y\big)  |y|^{-\frac{m-1}m} dy
	\end{eqnarray*}
	for a constant $c>0$. Since $|y|^{-\frac{m-1}m}\chi_{[-c, c]}$ is decreasing and integrable, the left hand side is bounded by $C \Vert f\Vert_{L^{\infty}} Mg(x).$ Thus, we get \eqref{est-p0}. Therefore, these estimates and interpolation give \eqref{estimateforfinitetypecurve} for $1< p,q\leq\infty$ and ${1}/{r}={1}/{p}+{1}/{q}$. The necessity part can be shown as before (see Remark \ref{rem-nec}).

		\subsubsection{Proof of  $(ii)$} 
		%
		Since $s_0=0$, using \eqref{near} we have
		\Be \label{mc} \mathcal{M}^{\gamma}(f,g)(x)=\sup_{ \bt\in (0,\infty)^2}\int f(x-t_{1}s)\,g  (x-t_{2}(c_\ast-s^{m}\phi(s)))\psi(s)ds,\Ee
		where $c_\ast=\gamma(s_0)$. 
		We make  change of variables $y=c_\ast-s^{m}\phi(s).$ Note $ |ds/dy|\sim |c_\ast-y|^{(1-m)/{m}}$. Thus, we get 
	\[
		\mathcal{M}^{\gamma}(f,g)(x)
		\lesssim \sup_{\bt\in (0,\infty)^2} \int_{|y-c_\ast|< c'}f(x-t_{1}s(y))\,g(x-t_{2}y)  |c_\ast-y|^{\frac{1-m}{m}} {dy} 
	\]
	for some $c'>0$.  Dyadic decomposition of  the integral away from $c_\ast$ gives
\[	\mathcal{M}^{\gamma}(f,g)(x) \lesssim \sum_{2^{-k}\le c'}  \mathcal{M}^{\gamma}_k(f,g)(x), \]
where
\Be
\label{mk}
\mathcal{M}^{\gamma}_k(f,g)(x) =2^{k\frac{m-1}m}  \sup_{\bt\in (0,\infty)^2} \int_{2^{-k} \le |y-c_\ast|< 2^{1-k} }f(x-t_{1}s(y))\,g(x-t_{2}y)   {dy} . 
\Ee

For $x\in \mathbb R$ and $l>0$, let us denote $I(x,l)=\{y\in \mathbb R:  |y-x|<l\}$.  
For $|h|\ge 1$, we  consider a maximal function 
\[ M^\ast_h f(x)=\sup_{t>0} \int_{I(h,2)}  f(x-ty)   {dy}.  \]  
If $|h|\lesssim 1$,  $ M^\ast_h f(x)\le C Mf(x)$ for a constant $C>0$. However,  if $|h|\gg 1,$  
$\int_{I(h,2)}  f(x-ty)   {dy}\le \int_{I(0,2|h|)}  f(x-ty)   {dy}\le 4h Mf(x)$. 
So, we have $\|M^\ast_h f\|_{L^{1,\infty}}\lesssim h\|f\|_1$. Interpolation with the trivial estimate $\|M^\ast_h f\|_{L^{\infty}}\lesssim \|f\|_\infty$  gives the following. 

\begin{lemma}\label{lemma1} 
Let $h\in \mathbb R$. Then, the estimate $\Vert M_{h}^{*} f\Vert_{L^p}\leq C(1+|h|)^{1/p}\Vert f\Vert_{L^{p}}$ holds for $1<p\le \infty$. 
		\end{lemma}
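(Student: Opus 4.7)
The plan is essentially the one indicated in the paragraph preceding the statement: reduce $M_h^\ast$ pointwise to the Hardy--Littlewood maximal operator $M$ with a constant that grows linearly in $1+|h|$, and then interpolate with the trivial $L^\infty$ endpoint.

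First I would establish the pointwise comparison $M_h^\ast f(x)\le C(1+|h|)\,Mf(x)$ for some absolute constant $C$. Writing the defining integral and making the change of variables $u=ty$, one gets $\int_{I(h,2)} f(x-ty)\,dy = t^{-1}\int_{x-t(h+2)}^{x-t(h-2)} f(u)\,du$. This interval has length $4t$ but is contained in the symmetric interval $[x-t(|h|+2),\,x+t(|h|+2)]$ of length $2t(|h|+2)$ around $x$. Enlarging the domain and identifying the resulting average, one obtains the bound $2(|h|+2)\,Mf(x)$, uniformly in $t>0$, hence the claimed pointwise estimate.

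Second, from this pointwise bound together with the weak-type $(1,1)$ property of $M$ one immediately gets the endpoint estimate $\|M_h^\ast f\|_{L^{1,\infty}}\lesssim (1+|h|)\,\|f\|_{L^1}$. On the other hand, the crude estimate $M_h^\ast f(x)\le 4\,\|f\|_{L^\infty}$ gives the second endpoint $\|M_h^\ast f\|_{L^\infty}\lesssim \|f\|_{L^\infty}$ with a constant independent of $h$. Applying the Marcinkiewicz interpolation theorem between these two endpoints yields $\|M_h^\ast f\|_{L^p}\lesssim (1+|h|)^{1/p}\,\|f\|_{L^p}$ for all $1<p<\infty$, and the case $p=\infty$ is the trivial estimate itself.

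There is no substantive obstacle. The only point that requires care is the exact geometry of the translated interval after the change of variables: one must push $[x-t(h+2),\,x-t(h-2)]$ into the \emph{symmetric} interval around $x$ of length $2t(|h|+2)$ rather than a larger one, so that the resulting constant in the pointwise bound is precisely linear in $1+|h|$; this linearity is what, after interpolation, produces the sharp exponent $1/p$ in the final estimate.
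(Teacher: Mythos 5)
Your proof is correct and follows essentially the same route as the paper: establish the pointwise domination $M_h^\ast f \lesssim (1+|h|)\,Mf$, deduce a weak $(1,1)$ bound with constant linear in $1+|h|$, pair it with the trivial $L^\infty$ bound, and interpolate. Your treatment of the change of variables and the containment of the shifted interval in a symmetric one around $x$ is slightly more explicit than the paper's (which simply enlarges $I(h,2)$ to $I(0,2|h|)$ for large $|h|$), but the substance is identical.
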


Surprisingly, the bound is sharp up to a constant multiple as can be seen by taking $f=\chi_{(0,1)}$. 
Indeed, note that $ M_{h}^{*} f(x)\sim 1/(1+h^{-1} x)$ for $x>0$ and $h\ge 2$. Thus, it follows that 
$\Vert M_{h}^{*} f\Vert_{L^p}/\|f\|_p\sim h^{1/p}$. 

Recalling \eqref{mk} and changing variables $h\to 2^{k} y$,  we have 
\[ \mathcal{M}^{\gamma}_k(f,g)(x) \lesssim 2^{-\frac{k}m}  \|f\|_\ninf  M^\ast_{2^kc_\ast} g(x)  .\]
So, by lemma \ref{lemma1} we have 
\[ \| \mathcal{M}^{\gamma}_k(f,g)\|_\qn \lesssim 2^{(\frac1q-\frac1m)k}  \|f\|_\ninf  \|g\|_\qn \]
for $1<q\le \infty$. 
\eqref{mk} and reversing the change of variables  $y=c_\ast-s^{m}\phi(s)$ yield  
$\mathcal{M}^{\gamma}_k(f,g)\lesssim 2^{-k/m} Mf(x) \|g\|_\infty$. Thus, we have  
$ \| \mathcal{M}^{\gamma}_k(f,g)\|_\pn \lesssim 2^{-k/m} \|f\|_\pn \|g\|_\ninf  $.  Therefore, 
interpolation gives 
\[ \| \mathcal{M}^{\gamma}_k(f,g)\|_\rn \lesssim 2^{(\frac1q-\frac1m)k}  \|f\|_\pn  \|g\|_\qn \]
for $1<p,q\le \infty$ and $1/r=1/p+1/q$. Therefore, we get \eqref{estimateforfinitetypecurve} for $1<p\leq\infty$, $m<q\leq\infty$ and ${1}/{p}+{1}/{q}<1$.

It now remains to prove the necessity part. To this end, we shall show that  \eqref{estimateforfinitetypecurve}  fails to hold if  $q\le m$ or ${1}/{p}+{1}/{q}\ge 1$. 
Failure of  \eqref{estimateforfinitetypecurve}  for ${1}/{p}+{1}/{q}\ge 1$ can be shown as before (see Remark \ref{rem-nec}). 
Thus, we only have to show that \eqref{estimateforfinitetypecurve}  fails  if  $q\le m$. To this end, we consider 
	\[ f=\chi_{[-1, 1]}, \quad  g=|x|^{-1/q}\log(1/|x|)^{-1}\chi_{[-1, 1]}(x).\]
	Then,  $g\in L^{q}$ for $1<q<\infty$. Since $\psi(s_0)\neq 0$, taking $t_1=C$ and $t_2=c_\ast$, we have 
	\[
		\mathcal{M}^{\gamma}(f,g)(x)
				\gtrsim  \int^{s_{0}+c}_{s_{0}-c}f\Big(x-\frac{x}{C}s\Big)g\Big(x-\frac{x}{c_\ast}(c_\ast- s^{m}\phi(s))\Big)ds.
	\]
	for some $c>0$. 
Taking $C>0$ large enough,  we see that 
	\Be  
		\mathcal{M}^{\gamma}(f,g)(x) \gtrsim  \int^{c_0}_{0}  (xs^{m})^{-1/q}\log^{-1}(\frac{1}{xs^{m}})~ds, \quad x\in (0, 1)
		\Ee
	for some positive constant  $c_0>0$.  Thus, changing variables $s\to x^{-1/m} s$ gives 
	\begin{eqnarray*}
		\mathcal{M}^{\gamma}(f,g)(x)
		\gtrsim x^{-1/m}\int^{c_0 x^{1/m}}_{0}   s^{-\frac mq} \log^{-1}(1/s)\, ds
	\end{eqnarray*}
	This shows $\mathcal{M}^{\gamma}(f,g)(x)=\infty$ for $x\in (0, 1)$ if $q\leq m$. 
	Hence, \eqref{estimateforfinitetypecurve} fails for $q\leq m$. 

\subsubsection{Proof of $(iii)$} 
Recall \eqref{mc} and \eqref{near}. By dyadic decomposition  away from $s=0$ after changing variables $s\to s+s_0$, we have  
		\[
		\mathcal{M}^{\gamma}(f,g)(x)
		\lesssim \sum_{2^{-k}\le c} \tilde{\mathcal{M}}^{\gamma}_k(f,g)(x),
			\]
	where 
	\[   \tilde{\mathcal{M}}^{\gamma}_k(f,g)(x)=  \sup_{\bt\in (0,\infty)^2}\int_{2^{-k}\le |s|<2^{1-k}}f(x-t_{1}(s+s_0))g\big(x-t_{2}(c_\ast-s^{m}\phi(s+s_0))\big)\,ds. \]
	Note that $\mathcal{M}^{\gamma}_k(f,g)(x)\lesssim   \sup_{t\in (0,\infty)}\int_{2^{-k}\le |s|<2^{1-k}}f(x-t_{1}(s+s_0))  ds  \|g\|_{L^\infty} $. Thus, changing variables $s\to 2^{-k}s$ gives 
	\Be 
	\label{pp}   \mathcal{M}^{\gamma}_k(f,g)(x)
	 \lesssim  2^{-k}  M^\ast_{2^ks_0} f(x) \|g\|_{L^\infty}.  
	 \Ee
	Note that $\mathcal{M}^{\gamma}_k(f,g)(x)\lesssim   2^{(m-1)k}  \|f\|_{L^\infty}  \sup_{t\in (0,\infty)}\int_{|y|<c2^{-mk}}g\big(x-t(c_\ast-y)\big)\,dy$ by change of variables $y=s^{m}\phi(s+s_0)$ since 
	$|dy/ds|\sim 2^{(1-m)k}$. 
	Thus,  similarly as above,  we get
	\Be
	\label{qq}  \mathcal{M}^{\gamma}_k(f,g)(x)
	\lesssim  2^{-k}  \|f\|_{L^\infty} M^\ast_{c2^{mk}} g(x) \Ee
	for some $c>0$. By those inequalities  we obtain
		\Be 
	\label{rpq} \mathcal{M}^{\gamma}_k(f,g)\|_\rn\lesssim 2^{(\frac 1p+\frac mq-1)k} \|f\|_\pn \|q\|_\qn 
	\Ee  
	for $1<p,q\le \infty$ and $1/r=1/p+1/q$. Indeed, \eqref{pp} and Lemma \ref{lemma1} give \eqref{rpq} for $r=p$,  $q=\infty$, and $1<p\le \infty$. 
	Besides,  \eqref{qq} and Lemma \ref{lemma1} give \eqref{rpq} for $r=q$,  $r=\infty$, and $1<q\le \infty$. 
	By interpolation  the estimate \eqref{rpq} follows on the above mentioned range of $p,q$. Consequently, we get \eqref{estimateforfinitetypecurve} if $1<p, q\le \infty$ and   ${m}/{q}+{1}/{p}<1$.


We prove that \eqref{estimateforfinitetypecurve} fails if ${m}/{q}+{1}/{p}\ge 1$ when $\gamma(s)=(s,\gamma_{2}(s))$ with $\gamma_{2}$  of type $m$ at $s_0$, $s_0\neq 0$, and $\gamma(s_0)\neq 0$. 
This can be shown by a slight modification of the argument in the proof of (the necessity part of) Theorem \ref{result2}. Since 
we already know \eqref{estimateforfinitetypecurve} fails if ${1}/{q}+{1}/{p}\ge 1$, we may assume $1\le q<\infty$. 

Since $\gamma_0$ is of type $m$ at $s_0$,  we have  $|\gamma_{2}(s_{0})-\gamma_{2}(s)|\sim |s-s_0|^m$ if   $s\in [s_{0}-c,s_{0}+c]$  with a constant $c>0$ small enough (see \eqref{near}).
		We consider 
		\[ f(x)=|x|^{-1/p}\log^{-\frac1p-\epsilon}(|x|^{-1})\chi_{|x|<1}, \quad  g(x)=|x|^{-\frac1q}\log^{-\frac1q-\epsilon}(|x|^{-1})\chi_{|x|<1},\]
		for an $\epsilon>0$ which we specify later. 
		 Then,  $f\in L^{p}(\mathbb{R})$ and $g\in L^{q}(\mathbb{R})$ for $1<p,q\le \infty$.  Recalling \eqref{max-} with $\gamma_1(s)=s$ and following the same argument as before, 	for $x\in (0,1)$ 	 
		 one can  see that 
			\[  \mathcal{M}^{\gamma}(f,g)(x)\gtrsim  \int^{c_0}_{0} (xs)^{-\frac1p-\frac mq}\log^{-\frac1p-\epsilon}\big(\frac{C_0}{xs}\big) 
			\log^{-\frac1q-\epsilon}\big(\frac{C_0}{xs^m}\big)
			ds \]
		for some constant $c_0, C_0>0$. Taking $c_0$ small enough and $C_0$ large enough, $\log\big(\frac{C_0}{xs}\big)\sim \log \big(\frac{C_0}{s}\big)$ and $\big(\frac{C_0}{xs^m}\big)\sim \log \big(\frac{C_0}{s}\big)$ if $s\in (0, c_0x).$  Thus, we have 
\[  \mathcal{M}^{\gamma}(f,g)(x)\gtrsim   x^{-\frac1p-\frac mq} \int^{c_0x}_{0} s^{-\frac1p-\frac mq}\log^{-\frac1p-\frac1q-2\epsilon}\big(\frac{C_0}{s}\big) 
			ds .\]		
Note that $1/p+1/q< 1$ since $q\neq \infty$, $1/p+ m/q\geq 1$, and $m>1$. Taking $\epsilon$ small enough such that 
$1/p+1/q+2\epsilon< 1$, we see  $\mathcal{M}^{\gamma}(f,g)(x)=\infty$ for $x\in (0,1)$ because ${m}/{q}+{1}/{p}\ge 1$.

\section{Multilinear generalizations}\label{multilinearversion} In this section we try to extend  the bilinear result to a multilinear setting. Let $m\ge 3$ and denote 
\[\bl=(\bl_1, \bl_2, \dots, \bl_m)\in [1,\infty)^m, \quad  \vec y= (y^1, y^2, \dots, y^m)\in (\mathbb R^{n})^m. \] 
We  consider the hypersurface $$S_{m}^{\bl}=\big\{\vec y\in \mathbb{R}^{mn}:\textstyle \sum^{m}_{j=1}|y^{j}|^{\bl_{j}}=1
\big\}$$
and an associated multilinear averaging operator given by 
\begin{eqnarray*}
\mathcal{A}_{m,\mathbf t}^{\bl }(\mathbf f)(x)=\int_{S_{m}^{\bl }}\prod^{m}_{j=1}f_{j}(x-t_{j}y^{j})~d\mu(\vec{y}),
\end{eqnarray*}
where $\mathbf {f}=(f_{1},\dots,f_{m})$, $\mathbf t=(t_1,\dots,t_{m})$, and $d\mu(\vec{y})$ denotes  the normalized surface measure on $S_{m}^{\bl }$. 
We study  
boundedness of the maximal operator 
\begin{eqnarray*}
\mathfrak{M}_{m}^{\bl }(\mathbf f)(x)=\sup_{\mathbf t>0}|\mathcal{A}_{m,\mathbf t}^{\bl }(\mathbf f)(x)|.
\end{eqnarray*}
Here, by $\sup_{\mathbf t>0}$ we mean $\sup_{t_1>0, \dots, t_m>0}$. We may assume that $f_{1},\dots,f_{m}$ are nonnegative.

\subsection{Trilinear maximal operators} We first study the case $m=3$. To state the result on  the (sub)trilinear operator $\mathfrak{M}^{\bl}_{3}$,   $\bl=(\bl_{1},\bl_{2},\bl_{3})\in [1,\infty)^3$, 
we introduce some notations. 
For a given $i\in \{1,2,3\}$ we set 
\[ 
	\frac{1}{p^{3}_{i}}=\frac{n}{\bl_{j_{2}}}+(1-\frac{n}{\bl_{j_{2}}})\Big(1-(\frac{1}{n}-\frac{1}{\bl_{j_{1}}})_{+}\Big)
	\]
	where $ j_{1}$, $ j_{2}$ are two distinct elements in the set $ \{1,2,3\}\setminus\{i\}$ such that $\bl_{j_{1}}\leq \bl_{j_{2}}$.  Particularly,  if $\bl_{j_1}\leq \bl_{j_2}\leq \bl_{j_3}$ and  $\bl_{j_{1}}\leq n$, we have $p^{3}_{j_2}=p^{3}_{j_3}=1$.  We also denote 
	\begin{eqnarray*}
	\mathcal{P}_{3}&=&\Big\{(x_{1},x_{2},x_{3})\in [0,1)^{3}:x_{1}+x_{2}+x_{3}<{(3n-1)}/{n}\Big\},
	\\
	\widetilde{\mathcal{P}}^{\bl}_{3}&=&\Big\{(x_{1},x_{2},x_{3})\in\mathcal{P}_{3}:x_{i}<1/{p^{3}_{i}},~~i=1,2,3\Big\}. 
\end{eqnarray*}

\begin{thm}\label{trilinearcase}
	Let $n\geq2$ and $1\leq p_{1},p_{2},p_{3}\leq\infty$ with ${1}/{p}=\sum^{3}_{j=1}{1}/{p_{j}}$. Then,   for $({1}/{p_{1}},{1}/{p_{2}},{1}/{p_{3}})\in\widetilde{\mathcal{P}}^{\bl }_{3}$ we have  the  estimate 
	\begin{eqnarray}\label{multilinearestimate2}
		\Vert \mathfrak{M}_{3}^{\bl }(\mathbf f)\Vert_{L^{p}(\mathbb{R}^{n})}\lesssim \prod^{3}_{j=1}\Vert f_{j}\Vert_{L^{p_{j}}(\mathbb{R}^{n})}. 
	\end{eqnarray}	
\end{thm}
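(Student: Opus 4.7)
The plan is to follow the slicing approach of Section \ref{sufficientpart1} one level higher. Since $|\nabla\Phi_3^\bl(\vec y)|^2 = \sum_j \bl_j^2|y^j|^{2(\bl_j-1)}$ is bounded between positive constants when $\bl_j \ge 1$, the same computation that produced \eqref{gs} gives, for every permutation $(i, j_1, j_2)$ of $(1,2,3)$ with $\bl_{j_1} \le \bl_{j_2}$, the representation
\[
\mathcal{A}_{3,\mathbf t}^\bl(\mathbf f)(x) \sim \int_{|y^{j_1}|^{\bl_{j_1}}+|y^{j_2}|^{\bl_{j_2}}<1} f_{j_1}(x-t_{j_1}y^{j_1})\, f_{j_2}(x-t_{j_2}y^{j_2})\, \omega_i^{n-\bl_i}\, \mathfrak A f_i(x, t_i \omega_i)\, dy^{j_1}dy^{j_2},
\]
where $\omega_i = (1-|y^{j_1}|^{\bl_{j_1}}-|y^{j_2}|^{\bl_{j_2}})^{1/\bl_i}$. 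Thus one of the three functions becomes a spherical average while the other two are integrated in Cartesian coordinates against $\omega_i^{n-\bl_i}$; by symmetry there are three such representations.

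When $\bl_i \le n$, the weight $\omega_i^{n-\bl_i}$ is bounded and the domain is contained in the product of unit balls. Separating variables and taking suprema independently in $t_{j_1}, t_{j_2}, t_i$ as in \eqref{ggs} yields
\[
\mathfrak M_3^\bl(\mathbf f)(x) \lesssim Mf_{j_1}(x)\, Mf_{j_2}(x)\, \mathcal M_s f_i(x).
\]
Combined with H\"older's inequality and the $L^p$-bounds for $M$ ($p>1$) and $\mathcal M_s$ ($p>n/(n-1)$), this furnishes \eqref{multilinearestimate2} for $(1/p_{j_1}, 1/p_{j_2}, 1/p_i) \in [0,1)^2 \times [0,(n-1)/n)$. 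Doing so for each $i$ with $\bl_i\le n$ and interpolating covers all of $\widetilde{\mathcal P}_3^\bl$ in the regime $\max_l \bl_l \le n$, in which $p_i^3 = 1$ for every $i$.

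The main obstacle is the case $\bl_i > n$, where $\omega_i^{n-\bl_i}$ blows up as $|y^{j_1}|^{\bl_{j_1}} + |y^{j_2}|^{\bl_{j_2}} \to 1$; on $S_3^\bl$ this is precisely the locus where $|y^i|$ is small. Following Case $(\mathrm B)$ of the bilinear proof we decompose dyadically by $\chi_k^i(y^{j_1}, y^{j_2}) = \chi_{[2^{-k}, 2^{1-k}]}(1-|y^{j_1}|^{\bl_{j_1}}-|y^{j_2}|^{\bl_{j_2}})$, the same slab corresponding to $|y^i|^{\bl_i} \sim 2^{-k}$ on $S_3^\bl$. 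Representation $i$ gives the growing bound $\sup_\mathbf t|\mathcal A^{\bl,i,k}_{3,\mathbf t}(\mathbf f)| \lesssim 2^{k(\bl_i-n)/\bl_i}\, Mf_{j_1}\, Mf_{j_2}\, \mathcal M_s f_i$, whereas switching to representation $j_l$ (for $l\in\{1,2\}$) restricts $y^i$ to a ball of radius $\sim 2^{(1-k)/\bl_i}$ and yields the decaying factor $2^{-kn/\bl_i}$ together with $Mf_i\, \mathcal M_s f_{j_l}\, Mf_{j_{3-l}}$ whenever $\bl_{j_l}\le n$ (so that the new weight $\omega_{j_l}^{n-\bl_{j_l}}$ is bounded). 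Interpolating the growing and decaying bounds produces summable geometric decay $2^{-\epsilon k}$ in the interior of the appropriate $L^p$ region, as in Case $(\mathrm B)$ of the bilinear argument. The hardest sub-case is $\bl_{j_1}, \bl_{j_2} > n$, in which every alternative representation also carries an unbounded weight; this necessitates a second, nested dyadic decomposition mirroring Case $(\mathrm C)$ and \eqref{eqn9}--\eqref{eqn10}, producing a two-parameter geometric sum. Summability of the double sum matches precisely the constraint $1/p_i < 1/p_i^3$, which is why the definition of $1/p_i^3$ blends the exponents $\bl_{j_1}, \bl_{j_2}$ through the boundary of the bilinear range $\mathcal P^{(\bl_{j_1},\bl_{j_2})}$. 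Putting together the three choices of $i$ and interpolating delivers \eqref{multilinearestimate2} on the whole of $\widetilde{\mathcal P}_3^\bl$.
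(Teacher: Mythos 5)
The broad slicing framework here is the same as the paper's, but your proposal diverges from the paper's proof at precisely the step that matters, and the hardest case is left as an unsubstantiated assertion.

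The paper's key move (estimate \eqref{eqn15}) is not to slice all the way down to a single sphere. Having restricted to the slab $|y^3|^{\bl_3}\sim 2^{-k}$, the paper keeps the lower-dimensional degenerate surface intact: for fixed $y^3$ the remaining $(y^1,y^2)$ live on a scaled copy of $S_2^{(\bl_1,\bl_2)}$, so
\[
\mathcal T_{\mathbf t}^{3,k}(\mathbf f)(x)\lesssim 2^{-kn/\bl_3}\,\mathfrak M^{(\bl_1,\bl_2)}(f_1,f_2)(x)\,Mf_3(x),
\]
and Theorem \ref{Theorem1} is invoked as a black box. This makes the argument genuinely inductive (as the paper emphasizes in the $m$-linear discussion) and delivers the sharp bilinear boundary directly in the definition of $p_i^3$. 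Your proposal instead always slices one extra coordinate to a spherical average, which replaces $\mathfrak M^{(\bl_1,\bl_2)}(f_1,f_2)$ by a product such as $\mathcal M_s f_{j_l}\cdot Mf_{j_{3-l}}$. That is a genuinely weaker pointwise control: $\mathcal M_s f_1\cdot Mf_2$ requires $1/p_1<(n-1)/n$, whereas $\mathfrak M^{(\bl_1,\bl_2)}(f_1,f_2)$ is bounded on the full region $\mathcal P^{(\bl_1,\bl_2)}$, which allows each of $1/p_1,1/p_2$ to approach $1$. You would have to recover the bilinear range by interpolating among several such weaker estimates (and, in the case $\bl_1,\bl_2,\bl_3>n$, by a second layer of dyadic decomposition), which is exactly the content of the proof of Theorem \ref{Theorem1} itself, re-derived inline inside the trilinear argument.

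Two concrete gaps: (i) you never carry out the two-parameter decomposition for the case $\min_i\bl_i>n$; the claim that ``summability of the double sum matches precisely $1/p_i<1/p_i^3$'' is asserted, not demonstrated, and the numerology in the definition of $1/p_i^3$ (which blends $n/\bl_{j_2}$ with the boundary of $\mathcal P^{(\bl_{j_1},\bl_{j_2})}$) is precisely what would have to be checked. (ii) You omit the preliminary decomposition \eqref{primary} of $\mathcal A^{\bl}_{3,\mathbf t}$ into $\mathcal T^1+\mathcal T^2+\mathcal T^3$ according to which coordinate has $|y^l|^{\bl_l}\le 1/2$; this restriction is what keeps the residual weight $\nu_1^\alpha$ in \eqref{eqn15} bounded (since $1-|y^3|^{\bl_3}\ge 1/2$) and keeps the bilinear factor at unit scale. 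Without it, the ``switch of representation'' you propose is not uniformly controlled across dyadic scales. The plan is not unreasonable, but as written it does not establish the boundedness on all of $\widetilde{\mathcal P}_3^{\bl}$; the paper's modular route via $\mathfrak M^{(\bl_1,\bl_2)}$ is both shorter and what actually powers the induction to general $m$.
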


Unlike Theorem \ref{Theorem1}, we do not expect that  the ranges in  Theorem \ref{trilinearcase}  are sharp (see Proposition \ref{thenecessarypart}).  One reason  that our argument is not enough to give the estimate 
in a sharp range is related to the lack of symmetry of the surface $S_{3}^{\bl }$, as can be seen,  in the estimates  $\eqref{eqn14}$ and $\eqref{eqn15}$

\begin{proof}[Proof of  Theorem \ref{trilinearcase}]
Again, the key idea  is to combine the slicing argument and 
boundedness of the bilinear maximal operator $\mathfrak{M}^{\bar\bl}_2$, $\bar\bl\in [1,\infty)^2$, which is previously shown. 
Let us set 
\[\textstyle \Phi(\vec y)=\sum^{3}_{j=1}|y^j|^{\bl_j}-1.\]
Using the identity \eqref{identity} for the hypersurface  $S^{\bl}_{3}=\{\vec y\in \mathbb{R}^{3n}:\Phi(\vec y)=0\}$, we get 
 \[
	\int_{S^{\bl}_{3}}\prod^{3}_{j=1}f_{j}(x-t_jy^j)~\frac{d\mu(\vec{y})}{|\nabla\Phi(\vec{y})|}=\int_{\mathbb{R}^{3n}}\prod^{3}_{j=1}f_{j}(x-t_jy^j) \delta ( \Phi(\vec y) )d\vec y. 
\]
Note that $|\nabla\Phi(\vec{y})|^{2}=\sum^{3}_{j=1}\bl^{2}_{j}|y^{j}|^{2(\bl_{j}-1)},$ so  $|\nabla\Phi(\vec{y})|> 0$ on $S^{\bl}_{3}$. 
Since $\bl_1, \bl_2, \bl_3\ge 1$,  we have $c_{\bl}\leq |\nabla\Phi(\vec{y})|\leq C_{\bl}<\infty$ on $S^{\bl}_{3}$ for some positive constants $c_{\bl},C_{\bl}$.  Thus, it follows that 
\Be
 \label{formula}
 \mathcal{A}^{\bl}_{3,\mathbf t}(\mathbf f)(x)\sim   \int_{\mathbb{R}^{3n}}\prod^{3}_{j=1}f_{j}(x-t_jy^j) \delta ( \Phi(\vec y) )d\vec y. 
\Ee

We set 
 \[ \nu_{a}(t)=(1-t)^{1/a}_+,
   \quad |\hat{y}^{k}|=\sum_{j\neq k} |y^{j}|^{\bl_{j}}.\]
We also denote $\vec{y}^{3}=(y^{1},y^{2}),$ $\vec{y}^{2}=(y^{1},y^{3}),$  and $\vec{y}^{1}=(y^{1},y^{3}).$  
Then, we claim that 
\Be
\label{3-3}
	\mathcal{A}^{\bl}_{3,\mathbf t}(\mathbf f)(x) \sim\int_{0\leq |\hat{y}^{k}|< 1}\prod_{j\neq k}f_{j}(x-t_{j}y^{j})\nu_{\bl_{k}}^{n-\bl_{k}}(|\hat{y}^{k}|)\mathfrak Af_{k}(x,t_{k}\nu_{\bl_{k}}(|\hat{y}^{k}|))d\vec{y}^{k}
\Ee
for  $k=1,2,3$. It is enough to show \eqref{3-3} for $k=3$. 
Denoting  $\vec{y}^{3}=(y^{1},y^{2})$,  we set $\Phi_{\vec{y}^3}(y^3)=\Phi(\vec{y})$ and observe
$\int_{\mathbb{R}^{3n}} F(\vec y) \delta ( \Phi(\vec y) )d\vec y=\int_{\mathbb{R}^{2n}} \int_{\mathbb{R}^{n}}   F(\vec y) \delta (\Phi_{\vec{y}^3}(y^3) ) dy^3 d\vec y^3$. 
Thus, using \eqref{formula} and the identity \eqref{identity}  for $\Phi_{\vec{y}^3}$, we  have
 \Be
 \label{formula1}
 	\mathcal{A}^{\bl}_{3,\mathbf t}(\mathbf f)(x)\sim\int_{0\leq |\hat{y}^{3}|< 1}\prod^{2}_{j=1}f_{j}(x-t_{j}y^{j})\int_{\Omega_{\vec{y}^3}}f_{3}(x-t_{3}y^{3})\frac{d\sigma_{\nu_{\bl_{3}}}(y^3)}{|\nabla\Phi_{\vec{y}^3}(y^3)|}~d\vec{y}^{3}, 
\Ee
 where $\Omega_{\vec{y}^3}=\Phi_{\vec{y}^3}^{-1}(0)\subset\mathbb{R}^{n}$ and $d\sigma_{\nu_{\bl_{3}}}$ is the surface measure on the sphere $\Omega_{\vec{y}^3}$.
 A computation shows $|\nabla\Phi_{\vec{y}^{3}}(y^3)|=\bl_{3}\nu^{\bl_3-1}_{\bl_{3}}(|\hat{y}^3|)$. Note that $\Omega_{\vec{y}^3}\subset\mathbb{R}^{n}$ is an $(n-1)$-dimensional sphere of radius $\nu_{\bl_{3}}(|\hat{y}^3|)$. Thus, by scaling we get 
 \eqref{3-3} for $k=3$. 

To prove Theorem \ref{trilinearcase},  we may assume, without loss of generality, that \Be\bl_1\leq \bl_2\leq \bl_3.\Ee
We consider the following cases, separately:  
\begin{align*}
&{(\mathrm A)}: \bl_{3}\leq n,  \quad\qquad \ \ (\mathrm B):\bl_2\leq n<\bl_3,  
\\
& (\mathrm C): \bl_1\leq n<\bl_2,
\quad \ (\mathrm D):  n<\bl_1.
\end{align*}

\subsection*{Case $(\mathrm A):\bl_3\leq n$} 
Since $\bl_1, \bl_2, \bl_{3}\leq n$, $\nu_{\bl_{k}}^{n-\bl_{k}}(|\hat{y}^{k}|)\le 1$.  By \eqref{3-3}  it follows that  
\Be
\label{mms}
\textstyle
\mathcal{A}^{\bl}_{3,\mathbf t}(\mathbf f)(x)\lesssim  \mathcal M_{s}f_{k}(x) \prod_{j\neq k} Mf_{j}(x)
\Ee
for $k=1,2,3$. Using H\"older's inequality and  $L^p$ boundedness of 
$M$ and $\mathcal M_s$,  we obtain  \eqref{multilinearestimate2}   for $\frac{n}{n-1}<p_{k}\leq\infty$ and $1<p_{j}\leq\infty$, $j\neq k$.

Now, interpolation gives  $L^{p_{1}}\times L^{p_{2}}\times L^{p_{3}}\rightarrow L^{p}$ bound on   $\mathfrak{M}_{3}^{\bl}$ for $({1}/{p_{1}},{1}/{p_{2}},{1}/{p_{3}})\in\mathcal{P}_{3}$.

\medskip

We consider the case  $(\mathrm D)$ before  the  cases $(\mathrm B)$ and $(\mathrm C)$.  Since  $\sum^{3}_{j=1}|y^j|^{\bl_j}=1$ in \eqref{formula}, it is clear that 
\Be 
\label{primary}
 \mathcal{A}^{\bl}_{3,\mathbf t}(\mathbf f)(x) \lesssim \sum^{3}_{l=1}\mathcal{T}^{l}_{\mathbf t}(\mathbf f)(x) :=\sum^{3}_{l=1}\int_{|y^l|^{a_l}\le 1/2}\prod^{3}_{j=1}f_{j}(x-t_jy^j) \delta ( \Phi(\vec y) )d\vec y.
 \Ee
 Using the identity \eqref{identity}  for $\Phi_{\vec{y}^l}$ (cf. \eqref{formula1}), we see that 
\Be
\label{333}
\textstyle\mathcal{T}^{l}_{\mathbf t}(\mathbf f)(x)  \lesssim  \sum^{\infty}_{k=2}\mathcal{T}^{l, k}_{\mathbf t}(\mathbf f)(x), 	
\Ee
where 
	\[  \mathcal{T}^{l,k}_{\mathbf t}(\mathbf f)(x)=  \int_{2^{-k}\leq 1- |\hat{y}^{l}|\leq 2^{1-k}}\prod_{j\neq l}f_{j}(x-t_{j}y^{j})\nu_{\bl_{l}}^{n-\bl_{l}}(|\hat{y}^{l}|)\,\mathfrak Af_{l}(x,t_{l}\nu_{\bl_{l}}(|\hat{y}^{l}|))d\vec{y}^{l}.\]

\subsection*{Case $(\mathrm D): n<\bl_1$}

We first consider $\mathcal{T}^{3}_{\mathbf t}$. 	
Since  $\nu_{\bl_{3}}^{n-\bl_{3}}(|\hat{y}^{3}|)=(1-|\hat{y}^{3}|)^{(n-\bl_{3})/{\bl_{3}}}\lesssim 2^{{k(\bl_{3}-n)}/{\bl_{3}}}$ in the above integral,  it follows that 
\begin{eqnarray}\label{eqn14}
	\mathcal{T}_{\mathbf t}^{3,k}(\mathbf f)(x)\lesssim2^{\frac{k(\bl_{3}-n)}{\bl_{3}}}Mf_{1}(x)Mf_{2}(x)\mathcal M_{s}f_{3}(x)
\end{eqnarray}
for  $k\geq 2$. Thus, the operator $\mathbf f \to \sup_{\bt>0} \mathcal{T}^{3,k}_{\mathbf t}(\mathbf f)$ is bounded from $L^{p_{1}}\times L^{p_{2}}\times L^{p_{3}}$ to $L^{p}$ for $1<p_{1},p_{2}\leq\infty$ and ${n}/(n-1)<p_{3}\leq\infty$ with its operator norm dominated by $C2^{{k(\bl_{3}-n)}/{\bl_{3}}}$.

Set $\Phi_{{y}^3}(\vec y^3)=\Phi(\vec{y})$. Recalling \eqref{primary} and using \eqref{identity}  for $\Phi_{{y}^3}$,   as before
we have 
\begin{eqnarray*}
	\mathcal{T}_{\mathbf t}^{3,k}(\mathbf f)(x)\sim\!\int_{\mathrm A_k}f_{3}(x-t_{3}y^{3})\nu_{1}^{\alpha}(|y^{3}|^{\bl_3})\!\!\int_{S_{2}^{\bar\bl}}\prod^{2}_{j=1}f_{j}\big(x-t_{j}\nu_{\bl_{j}}(|y^{3}|^{\bl_3})y^{j}\big)\frac{d\mu(\vec{y}^3)}{|\nabla \Phi_{y^{3}}|}dy^{3},
\end{eqnarray*}
where  $\mathrm A_k=\{y^3: |y^{3}|\sim 2^{\frac {-k}{\bl_{3}}}\}$,  $\alpha=({n}/{\bl_{1}})+({n}/{\bl_{2}})-1$,  $\bar \bl=(\bl_1, \bl_2)$,  $d\mu$ is the surface measure on  $S_{2}^{\bar\bl}$.
Noting that 
\begin{eqnarray}\label{Gradient}
	|\nabla \Phi_{y^{3}}(\vec{y}^3)|^2=\sum^{2}_{j=1}\bl^{2}_{j}\nu_{\bl_{3}}^{{2(\bl_{j}-1)}}(|y^{3}|^{\bl_3})|y^{j}|^{2\bl_{j}-2}, 
	\end{eqnarray}
we have$|\nabla \Phi_{y^{3}}(\vec{y}^3)|\geq \min\{\bl_{2}(\frac{7}{16})^{\frac{\bl_{1}-1}{\bl_{1}}},\bl_{1}(\frac{7}{16})^{\frac{\bl_{2}-1}{\bl_{2}}}\}$. Also,  $\nu_{1}^{\alpha}(|y^{3}|^{\bl_3})=(1-|y^{3}|^{\bl_{3}})_{+}^{\alpha}$ is bounded above by some fixed constant even if $\alpha<0$. Consequently, it follows that
\begin{eqnarray}\label{eqn15}
	\mathcal{T}_{\mathbf t}^{3,k}(\mathbf f)(x)\lesssim  2^{-kn/\bl_{3}} \mathfrak{M}^{(\bl_1,\bl_2)}(f_{1},f_{2})(x) Mf_{3}(x).
\end{eqnarray}
Thus,  using Theorem \ref{Theorem1},  we see that $\mathcal{T}_{\mathbf t}^{3,k}$ is bounded from   $L^{p_{1}}\times L^{p_{2}}\times L^{p_{3}}$ to $L^{p}$ for $1<p_{3}\leq\infty$ and  ${1}/{p_{1}}<1-({1}/{n}-{1}/{\bl_{2}}), {1}/{p_{2}}<1-({1}/{n}-{1}/{\bl_{1}})$ with the operator norm dominated by $C2^{{-nk}/{\bl_{3}}}$. 


Now, using the estimates resulting from  $\eqref{eqn14}$ and $\eqref{eqn15}$,  via  interpolation  we see that the operator $\mathbf f \to \sup_{\bt>0} \mathcal{T}^{3,k}_{\mathbf t}(\mathbf f)$ is bounded from $L^{p_{1}}\times L^{p_{2}}\times L^{p_{3}}$ to $L^{p}$ with the operator norm dominated by $2^{-\epsilon k}$ for some $\epsilon>0$  
provided that 
\Be
\label{exp}
\begin{aligned}
\frac{1}{p_{1}}&<\frac{n}{\bl_{3}}+(1-\frac{n}{\bl_{3}})\big(1-(\frac{1}{n}-\frac{1}{\bl_{2}})\big),
\\
\frac{1}{p_{2}}&<\frac{n}{\bl_{3}}+(1-\frac{n}{\bl_{3}})\big(1-(\frac{1}{n}-\frac{1}{\bl_{1}})\big), 
\\
\frac{1}{p_{3}}&<\frac{n}{\bl_{3}} \frac{n-1}{n} + (1-\frac{n}{\bl_{3}}) =\frac{\bl_{3}-1}{\bl_{3}}.
\end{aligned} 
\Ee
Hence,   recalling \eqref{333}, we see that  $\mathbf f \to \sup_{\bt>0} \mathcal{T}^{3}_{\mathbf t}(\mathbf f)$ bounded from $L^{p_{1}}\times L^{p_{2}}\times L^{p_{3}}\rightarrow L^{p}$ as long as \eqref{exp} holds. 
Using the same argument,  for $i=1,2$ we also see $\mathbf f \to \sup_{\bt>0} \mathcal{T}^{i}_{\mathbf t}(\mathbf f)$ is bounded from $L^{p_{1}}\times L^{p_{2}}\times L^{p_{3}}\rightarrow L^{p}$
if  $ 1/{p_{i}}<({\bl_i-1})/{\bl_i}$, $1/p_{3-i}<1/p_{3-i}^{3} $ and $1/{p_{3}}<1/p^{3}_{3} $.  Note that 
$({\bl_i-1})/{\bl_i}<1/p^{3}_{i}$ for $i=1,2,3$.  Therefore, by \eqref{primary} we have the estimate \eqref{multilinearestimate2} for $({1}/{p_{1}},{1}/{p_{2}},{1}/{p_{3}})\in\widetilde{\mathcal{P}}^{\bl }_{3}$.

\subsection*{Case $(\mathrm B):\bl_2\leq n<\bl_3 $} 

Since $\bl_1, \bl_2\le n$, using \eqref{3-3}
 we have \eqref{mms} for $k=1,2$. 
Invoking $L^{p}$-boundedness  of the Hardy--Littlewood maximal operator $M$ and the spherical maximal operator $\mathcal{M}_{s}$, 
we see that \eqref{multilinearestimate2} holds if $\frac{n}{n-1}<p_{1}\leq \infty$ and $ 1<p_{2},p_{3}\leq\infty$, and if  $\frac{n}{n-1}<p_{2}\leq \infty$ and $ 1<p_{1},p_{3}\leq\infty$. On the other hand, from~\eqref{3-3} we have 
\begin{eqnarray*}
	\mathcal{A}^{\bl}_{3,\mathbf t}(\mathbf f)(x)\sim\int_{0\leq |\hat{y}^{3}|< 1}\prod^{2}_{j=1}f_{j}(x-t_{j}y^{j})\nu_{\bl_{3}}^{n-\bl_{3}}(|\hat{y}^{3}|)\,\mathfrak Af_{3}(x,t_{3}\nu_{\bl_{3}}(|\hat{y}^{3}|))~d\vec{y}^{3}.
\end{eqnarray*}
Since $\bl_3>n$,  $\nu_{\bl_{3}}^{n-\bl_{3}}(|\hat{y}^{3}|)$ is no longer bounded. We further decompose the above integral as in Case $(\mathrm D)$.
Indeed, we have 
\[\mathcal{A}^{\bl}_{3,\mathbf t}(\mathbf f)(x)\sim \sum^{\infty}_{k=1}\mathcal{T}^{k}_{\mathbf t}(\mathbf f)(x),\]
where 
\[\mathcal{T}^{k}_{\mathbf t}(\mathbf f)(x):=\int_{2^{-k}\leq 1- |\hat{y}^{3}|\leq 2^{1-k}}\prod^{2}_{j=1}f_{j}(x-t_{j}y^{j})\nu^{n-\bl_{3}}_{\bl_{3}}(|\hat{y}^{3}|)\mathfrak Af_{3}(x,t_{3}\nu_{\bl_{3}}(|\hat{y}^{3}|))~d\vec{y}^{3}.
\]
For $\mathcal{T}^{1}_{\mathbf t}(\mathbf f)$, observe that  $\nu^{n-\bl_{3}}_{\bl_{3}}(|\hat{y}^{3}|)=(1-|\hat{y}^{3}|)_{+}^{(n-\bl_{3})/{\bl_{3}}}$ is bounded above by a fixed constant.
Hence, 
\begin{eqnarray}\label{eqn13}
	\mathcal{T}^{1}_{\mathbf t}(\mathbf f)(x)\lesssim Mf_{1}(x)Mf_{2}(x)\mathcal M_{s}f_{3}(x).
\end{eqnarray}
So, \eqref{multilinearestimate2} holds for $\mathcal{T}^{1}_{t}$ for $1<p_{1},p_{2}\leq \infty$ and $n/(n-1)<p_{3}\leq\infty$.

 On the other hand, for $k\geq2$ the operator $\mathcal{T}^{k}_{t}$  satisfies  \eqref{eqn14} and \eqref{eqn15}. Since $\bl_1, \bl_2\leq n$ and $n< \bl_3$, using 
 $L^p$ boundedness of the Hardy--Littlewood maximal and 
 spherical functions and applying  interpolation with $\theta:=n/\bl_{3}$, we see that 
 $\mathbf f \to \sup_{\bt>0} \mathcal{T}^{k}_{\mathbf t}(\mathbf f)$ is bounded from $L^{p_{1}}\times L^{p_{2}}\times L^{p_{3}}$ to $L^{p}$ with its operator norm dominated by $C2^{-\epsilon k}$ for some $\epsilon>0$  
provided that 
\[
	\frac{1}{p_{1}}, \frac{1}{p_{2}}< \theta \times 1+(1-\theta)\times 1, \quad 
	\frac{1}{p_{3}}<\theta\times \frac{n-1}{n} + (1-\theta)\times 1 =\frac{\bl_{3}-1}{\bl_{3}}.
\]

Finally, we combine all the three estimates of $\mathfrak{M}^{\bl}_{3}$ and apply multilinear interpolation to get the desired boundedness. 

\textbf{Case $(\mathrm C)$} can be handled in the similar manner. We omit the detail.
\end{proof}

\subsection{Multilinear maximal bounds for $m\geq4$} As in  the trilinear case, using the slicing argument, we  get
\Be\label{I}
	\mathcal{A}^{\bl }_{m, \mathbf t}(\mathbf f)(x)\sim\int_{ |\hat{y}^{l}|< 1}\prod_{j\neq l}f_{j}(x-t_{j}y^{j})\nu_{\bl_l}^{n-\bl_{l}}(|\hat{y}^{l}|)\mathfrak Af_{l}(x,t_{l}\nu_{\bl_l}(|\hat{y}^{l}|)) ~d\vec{y}^{l},
\Ee
for $l=1,\dots, m$, where  $\nu_{\bl_l}(t)=(1-t)_{+}^{1/\bl_{l}}$, $|\hat{y}^{l}|=\sum_{j\neq l }|y^{j}|^{\bl_{j}}$ and $d\vec{y}^{l}=dy^{1}\cdots dy^{l-1}$ $dy^{l+1}\cdots dy^{m}$ missing $dy^{l}$. 
If $\bl_1, \bl_2, \dots, \bl_m\le n$, 
we get 
\[
	\textstyle \mathfrak{M}_{m}^{\bl }(\mathbf f)(x)\lesssim \prod^{m}_{j=1,j\neq i}Mf_{j}(x)\mathcal M_{s}f_{i}(x), \quad i=1, 2,\dots, m-1. 
\]
By boundedness of the Hardy-Littlewood and  spherical maximal operators and interpolation we have  
\Be	\label{multilinearestimate22}
		\Vert \mathfrak{M}_{m}^{\bl }(\mathbf f)\Vert_{L^{p}(\mathbb{R}^{n})}\lesssim \prod^{m}_{j=1}\Vert f_{j}\Vert_{L^{p_{j}}(\mathbb{R}^{n})}. 
	\Ee
 provided that $1<{p_{1}},\dots,{p_{m}}\le \infty$ and $\sum^{m}_{i=1}1/p_{i}<(mn-1)/{n} .$

If $\bl_i>n$ for some $i$, $\nu_{\bl_i}(|\hat{y}^{i}|)^{n-\bl_{i}}$ in \eqref{I} is not bounded any longer. Nevertheless, 
following the previous argument (see, Case $(\mathrm D)$ in the proof of  Theorem \ref{trilinearcase}),  one can obtain maximal bounds for $\mathfrak{M}_{m}^{\bl }$ by decomposing the operator dyadically away from the the set $|\hat{y}^{i}|=1$.   
Indeed,  recall that boundedness of the bilinear maximal operators $\mathfrak{M}^{\bar \bl}$, $\bar \bl\in [1,\infty)^2$ is used to prove  the trilinear estimates in the proof of Theorem \ref{trilinearcase}(e.g., see \eqref{eqn15}).  
This idea can be further generalized so that one can obtain the estimate for  the $m$ linear maximal operator $\mathfrak{M}_{m}^{\bl}$ while assuming boundedness of  the $(m-1)$ linear maximal  operators $\mathfrak{M}_{m-1}^{\bar\bl}$, $\bar{\bl}\in [1,\infty)^{m-1}$.  So, one can extend the trilinear result to any multilinear maximal operator in inductive manner. 
 This can be done by routine adoption of the proof of Theorem \ref{trilinearcase}. We leave it to the interested reader. However, the consequent result is far from being sharp. 
 
 Finally, we discuss necessary conditions on the exponents $p_1, \dots, p_m$  for the estimate \eqref{multilinearestimate22} to hold.

\begin{prop}\label{thenecessarypart}
	Let $n\geq2$, $1\leq p_{1},p_{2},\dots,p_{m}\leq\infty$, and ${1}/{p}=\sum^{m}_{j=1}{1}/{p_{j}}$. Then  \eqref{multilinearestimate22} holds only if $\sum^{m}_{i=1}1/p_{i}\le (mn-1)/{n}$ and 
	\Be\label{2nd} 1/p_{i}<1-\big({1}/{n}-\textstyle\sum^{m}_{j=1,j\neq i}\frac{1}{\bl_{j}}\big)_{+},\quad i=1,2,\dots,m.\Ee
	
\end{prop}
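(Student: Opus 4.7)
My plan is to adapt the sharpness arguments for the bilinear case given in Section~\ref{sec:2.2}, building simple test functions from characteristic functions of balls whose radii are tuned to each condition. Throughout, I will rely on the multilinear analogue of \eqref{formula},
$\mathcal{A}^{\bl}_{m,\mathbf t}(\mathbf f)(x)\sim\int_{\mathbb{R}^{mn}}\prod_j f_{j}(x-t_jy^j)\,\delta(\Phi(\vec y))\,d\vec y$,
where $\Phi(\vec y)=\sum_j|y^j|^{\bl_j}-1$, and the multilinear slicing identity \eqref{I}.

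For the scaling bound $\sum_{i=1}^{m}1/p_i\le (mn-1)/n$, I would first pick a reference point $\vec y_0=(s_1\hat e,\dots,s_m\hat e)\in S_m^{\bl}$ with all coordinates nonzero; such $s_1,\dots,s_m>0$ satisfying $\sum_j s_j^{\bl_j}=1$ exist by continuity of $s\mapsto \sum_j s^{\bl_j}$. Take $f_j=\chi_{B(0,C_j\delta)}$ with appropriately chosen $C_j$, and for $x$ in a small annular region around $|x|=1$ aligned with $\hat e$, set $t_j=|x|/s_j$ so that $t_jy^j_0=|x|\hat e$. Following the argument used to derive \eqref{nec1}, the integrand $\prod_jf_j(x-t_jy^j)$ is $\sim1$ on a subset of $S_m^{\bl}$ of surface measure $\sim\delta^{mn-1}$, so that $\mathfrak{M}_m^{\bl}(\mathbf f)(x)\gtrsim\delta^{mn-1}$ on a set of unit measure. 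Comparing with $\prod_j\|f_j\|_{p_j}\sim\delta^{n\sum 1/p_j}$ and letting $\delta\to 0$ produces the asserted inequality.

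For the conditions \eqref{2nd}, fix $i$ and take $f_i=\chi_{B(0,C\delta)}$ while $f_j=\chi_{B(0,10)}$ for $j\neq i$. Set $t_j=|x|$ for all $j$ and restrict to $1\le|x|\le 2$. Applying \eqref{I} with $l=i$ and restricting the domain of integration to $\{\vec y^i:|\hat y^i|\le \delta\}$, the weight $\nu_{\bl_i}^{n-\bl_i}(|\hat y^i|)$ is comparable to $1$, the quantity $\nu_{\bl_i}(|\hat y^i|)$ differs from $1$ by $O(\delta)$, and the spherical average $\mathfrak Af_i(x,t_i\nu_{\bl_i}(|\hat y^i|))$ is $\gtrsim \delta^{n-1}$. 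The measure of $\{\sum_{j\neq i}|y^j|^{\bl_j}\le \delta\}$ equals $\sim\delta^{n\sum_{j\neq i}1/\bl_j}$ by the scaling $y^j\mapsto \delta^{1/\bl_j}y^j$. Hence $\mathfrak M_m^{\bl}(\mathbf f)(x)\gtrsim \delta^{n\sum_{j\neq i}1/\bl_j+n-1}$ on a unit-measure set; comparing to $\|f_i\|_{p_i}\sim\delta^{n/p_i}$ and letting $\delta\to 0$ yields $1/p_i\le 1-(1/n-\sum_{j\neq i}1/\bl_j)_+$.

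The principal obstacle lies in the first step, where one must confirm that the product box $\prod_jB(x/t_j,c_j\delta/t_j)$ meets $S_m^{\bl}$ in a set of surface measure of the correct order $\delta^{mn-1}$. This reduces to transversality of the product box to $S_m^{\bl}$ near $\vec y_0$, which is ensured because $|\nabla\Phi(\vec y_0)|$ is bounded above and away from zero when $\bl_j\ge 1$ and all $|y^j_0|>0$, exactly as in the computation following \eqref{formula}. The second step is modeled directly on the derivation of \eqref{nec2} and presents no essential difficulty beyond bookkeeping; if the strict version of \eqref{2nd} is required (rather than the non-strict version produced by the bare scaling), one may refine $f_i$ by an appropriate logarithmic factor, analogously to the test functions in the proof of Theorem~\ref{result2}.
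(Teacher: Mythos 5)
Your argument is essentially the paper's: the scaling bound $\sum_i 1/p_i\le(mn-1)/n$ comes from the same kind of small-ball test functions as in Section~\ref{sec:2.2}, and for \eqref{2nd} you use the same choice ($f_i$ a ball of radius $\sim\delta$, the other $f_j$ fixed balls), the same slicing identity \eqref{I} with $l=i$, and the same restriction to the region $\sum_{j\ne i}|y^j|^{\bl_j}\lesssim\delta$ followed by scaling. The only substantive thing you add is the (accurate) observation that the bare scaling argument yields the non-strict inequality $1/p_i\le 1-(1/n-\sum_{j\ne i}1/\bl_j)_+$ rather than the strict one stated in \eqref{2nd} — the paper is silent on this point — together with the plausible remark that a logarithmic refinement as in the proof of Theorem~\ref{result2} would upgrade it.
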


\begin{proof}[Proof of Proposition \ref{thenecessarypart}]
One can easily show that  the condition $\sum^{m}_{i=1}1/p_{i}\le (mn-1)/{n}$ is necessary for 
\eqref{multilinearestimate22} using \eqref{I} and the characteristic functions of balls of radius $\delta$ (see Section \ref{sec:2.2}).  
We only show \eqref{2nd}.

Consider $f_{2}= f_{3}=\cdots=f_{m}=\chi_{B(0,4)}$ and $f_{1}=\chi_{B(0,C\delta)}$ for some constant $C>2$ and $\delta<1/(2Cm)$. 
Let  
\[  R(\delta)=\{\vec{y}^{1}\in (\mathbb{R}^{n})^{m-1}:|y^{j}|^{\bl_{j}}\leq\delta, \quad j=2,3,\dots, m\}.\] Note that  volume of the rectangle $R(\delta)$ is  $\sim \delta^{\sum^{m}_{j=2}{n}/{\bl_j}}$. 
We now use  \eqref{I} with $l=1$ and  observe that  $\nu_{\bl_{1}}^{n-\bl_{1}}(|\hat{y}^{1}|)\sim1$  for $\vec{y}^{1}\in R(\delta)$. Therefore, for $1\leq |x|\leq 2$, we see that 
\[
 \mathfrak M^{\bl}_{m}(\mathbf f)(x)
 	\gtrsim  \sup_{\bt>0} \int_{R(\delta)}\prod^{m}_{j=2}f_{j}(x-t_{j}y^{j})\,\mathfrak Af_{1}(x,t_{1}\nu_{\bl_1}(|\hat{y}^{1}|))\,d\vec{y}^{1}
 	\gtrsim  \delta^{\sum^{m}_{j=2}\frac{n}{\bl_j}}\delta^{n-1}.
\]
Hence, the estimate  \eqref{multilinearestimate22} implies $
\delta^{(n-1)+\sum^{m}_{j=2}n/{\bl_j}}\lesssim \delta^{\frac{n}{p_{1}}}.$
Taking $\delta\rightarrow0$ gives \eqref{2nd} for $i=1$.  Interchanging the role of the functions, we similarly obtain  \eqref{2nd} for $2\leq i\le  m$.	
\end{proof}

\section*{ Acknowledgements}  
This work was  supported by the  South Korea NRF grant no.  2022R1A4A1018904 (Lee $\&$ Shuin) and  BK21  Post doctoral fellowship of Seoul National University (Shuin).

\end{document}